\documentclass[11pt,a4paper]{article}
\usepackage[T1]{fontenc}
\usepackage[utf8]{inputenc}
\usepackage{lmodern,amsmath,amssymb,amsthm,mathtools,mathrsfs}
\usepackage[margin=28mm]{geometry}
\usepackage{microtype,enumitem,xcolor}
\usepackage[colorlinks=true,linkcolor=blue!45!black,citecolor=blue!45!black,urlcolor=blue!45!black]{hyperref}
\usepackage{fancyhdr}
\newtheorem{theorem}{Theorem}[section]
\newtheorem{proposition}[theorem]{Proposition}
\newtheorem{lemma}[theorem]{Lemma}
\newtheorem{corollary}[theorem]{Corollary}
\theoremstyle{definition}\newtheorem{definition}[theorem]{Definition}
\theoremstyle{remark}\newtheorem{remark}[theorem]{Remark}
\newcommand{\Gr}{\operatorname{Gr}}
\newcommand{\Fl}{\operatorname{Fl}}
\newcommand{\rk}{\operatorname{rk}}
\newcommand{\Hom}{\operatorname{Hom}}
\newcommand{\Hilb}{\operatorname{Hilb}}
\newcommand{\Pic}{\operatorname{Pic}}

\newcommand{\OO}{\mathcal O}
\newcommand{\PP}{\mathbf P}
\newcommand{\CC}{\mathbf C}
\newcommand{\cA}{\mathcal A}
\newcommand{\cW}{\mathcal W}
\newcommand{\cP}{\mathcal P}
\newcommand{\cR}{\mathcal R}
\newcommand{\cQ}{\mathcal Q}
\newcommand{\cU}{\mathcal U}
\newcommand{\Rmin}{R_2^{\mathrm{min}}}
\newcommand{\bir}{\dashrightarrow}
\DeclareMathOperator{\codim}{codim}
\title{Moduli of Conics on General Pl\"ucker Linear Sections\\of Grassmannians}
\author{Zheyuan Fu\thanks{\raggedright School of Mathematics, Shandong University, Jinan 250100, P. R. China. Email: \href{mailto:202511781@mail.sdu.edu.cn}{202511781@mail.sdu.edu.cn}.}}
\date{September 2026}
\hypersetup{pdftitle={Moduli of Conics on General Plucker Linear Sections of Grassmannians},pdfauthor={Zheyuan Fu}}
\begin{document}
\maketitle
\begin{abstract}
Let $G=\Gr(k,V)$ be a complex Grassmannian in its Pl\"ucker embedding, and let $Y_E$ be a general codimension-$r$ linear section. We study the open Hilbert scheme $R_2(Y_E)$ of smooth conics using the kernel and span of a conic. These define a flag variety $B=\Fl(k-2,k+2;V)$ and a relative Grassmannian of three-planes in a rank-six bundle. For $0\le r\le3$, we prove that $R_2(Y_E)$ is nonempty, smooth, irreducible and rational of dimension $e=2\dim V+k(\dim V-k)-3-3r$, with birational model $B\times\Gr(3,6-r)$. For $r\ge4$, a general-position argument gives a componentwise birational correspondence with a rank-three degeneracy locus and two smooth projective incidence resolutions. Every component meets the locus of smooth minimal-envelope conics; no additional density assumption is needed for general $E$. If $0\le e\le r$, the degeneracy locus itself is smooth. In dimension zero we obtain a top Chern class formula, recovering the known counts $1225$ and $1176$ by exact localization.
\end{abstract}
\noindent\textbf{Keywords.} Grassmannian; linear section; conic; kernel and span; degeneracy locus; rationality.\\
\textbf{2020 Mathematics Subject Classification.} 14M15, 14E08, 14C17, 14H10.

\section{Introduction}
Let $V$ be an $n$-dimensional complex vector space and set
\[
G=\Gr(k,V)\hookrightarrow\PP(\wedge^k V),\qquad 2\le k\le n-2.
\]
Our Grassmannians parametrize subspaces. Write $H=c_1(\OO_G(1))$ and $W_0=H^0(G,\OO_G(1))=\wedge^kV^\vee$. For an $r$-plane $E\subset W_0$, put
\[
Y_E=G\cap\PP(E^\perp),\qquad e=e(k,n,r)=2n+k(n-k)-3-3r.
\]
We assume $0\le r\le\dim G$; a general $Y_E$ is then smooth of dimension $k(n-k)-r$. The notation $R_d(Y_E)$ means the open Hilbert scheme of smooth embedded rational curves of Pl\"ucker degree $d$. In particular, $R_2(Y_E)$ does not include singular conics or double covers of lines. Throughout, ``general'' means in a nonempty Zariski open subset of the indicated parameter space.

The kernel and span of a curve in a Grassmannian are the intersection and sum of its corresponding vector subspaces. Their role in quantum Schubert calculus is classical: Buch--Kresch--Tamvakis \cite[Section 2.1]{BKT} use the bounds $\dim\ker(C)\ge k-d$ and $\dim\operatorname{span}(C)\le k+d$ for degree-$d$ rational curves. For conics, the open locus where both bounds are equalities naturally reduces to plane sections of $\Gr(2,4)$. The associated relative Grassmannian model is already explicit for $\Gr(2,5)$ in Chung--Hong--Lee \cite{CHL} and Chung--Lee \cite{CL}; the latter also describe blow-up/down constructions for Hilbert schemes of conics on quintic del Pezzo varieties. Further flip constructions for quadrics on del Pezzo varieties, including linear sections of $\Gr(2,5)$, are developed in \cite{Shah}. Related compactified spaces are studied in \cite{CM}, while Str\o mme's work \cite{Stromme} supplies the irreducibility of spaces of parametrized maps.

The purpose here is to formulate this construction uniformly for every $k,n$ and to control all the strata that could contribute extra components after imposing linear equations. There are two distinct issues: the rank of the equations on the rank-six envelope bundle may drop, and the conic itself may have a larger kernel or a smaller span. The main new point is therefore not only the relative Grassmannian description of the minimal-envelope locus, but the uniform control of its complement after imposing general linear equations. This rules out hidden components supported on nonminimal conics or on lower-rank boundary strata and leads to componentwise birational models in every codimension. Uniform estimates for both phenomena yield the following theorem.

\begin{theorem}\label{thm:low}
For $0\le r\le3$ and general $E$, the scheme $R_2(Y_E)$ is nonempty, smooth, irreducible and rational of dimension $e(k,n,r)$. More precisely,
\[
R_2(Y_E)\bir\Fl(k-2,k+2;V)\times\Gr(3,6-r)
\]
is a birational equivalence.
\end{theorem}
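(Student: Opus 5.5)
We will stratify $R_2(G)$ by the kernel and span of a conic and work on the open stratum where both Buch--Kresch--Tamvakis bounds are equalities. Let $R_2^{\min}(G)\subset R_2(G)$ be the locus of smooth conics $C$ with $\dim\ker C=k-2$ and $\dim\operatorname{span}C=k+2$. Sending $C$ to $(K,S)=(\ker C,\operatorname{span}C)$ gives a morphism $R_2^{\min}(G)\to B=\Fl(k-2,k+2;V)$. Over $B$ there is the rank-six bundle $\cW=\wedge^2(S/K)$. The sub-Grassmannian $\{U: K\subset U\subset S\}\cong\Gr(2,S/K)$ is a smooth quadric in $\PP(\cW_{(K,S)})$. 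A smooth conic with kernel $K$ and span $S$ is a plane section of this quadric, so it is determined by a $3$-plane $P\subset\cW$ such that $\PP(P)\cap\Gr(2,S/K)$ is smooth. The plane must also not lie in any smaller $\Gr(2,\cdot)$, but that condition is automatic in the smooth case, since the kernel and span are then exactly $K$ and $S$. This identifies $R_2^{\min}(G)$ with an open dense subset of $\Gr(3,\cW)\to B$. We get
\[
\dim R_2^{\min}(G)=\dim B+9=(k-2)(n-k+2)+4(n-k-2)+9=2n+k(n-k)-3,
\]
which matches $e(k,n,0)$.

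Next we impose $E$. The Plücker restriction gives a bundle map $\phi_E\colon E\otimes\OO\to\cW^\vee$ on $B$: a linear form on $\wedge^kV$ restricted to $\wedge^{k-2}K\otimes\wedge^2(S/K)$, after fixing a generator of $\wedge^{k-2}K$. Let $B^\circ\subset B$ be the open set where $\phi_E$ is injective, so its annihilator $\cW_E:=\ker(\cW\to E^\vee)$ has rank $6-r$ there. A conic in $R_2^{\min}$ lies on $Y_E$ exactly when $P\subset\cW_E$. Hence $R_2^{\min}(Y_E)\cap\pi^{-1}(B^\circ)$ is an open subset of the relative Grassmannian $\Gr(3,\cW_E)\to B^\circ$. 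This space is smooth and irreducible of dimension $\dim B+3(3-r)=e$, and it is Zariski-locally $B^\circ\times\Gr(3,6-r)$. Since $B$ is rational, this gives the claimed birational model. Nonemptiness of the good open set needs the following: for $r\le3$ a general $(6-r)$-dimensional subspace of $\wedge^2\CC^4$ contains a $3$-plane meeting the quadric smoothly. This holds because a general 3-plane in $\wedge^2\CC^4$ already cuts a smooth conic, and $\Gr(3,6-r)$ maps onto a dense set of such planes as $E$ varies. Rank drops of $\phi_E$ are controlled by the Kempf--Laksov / Eagon--Northcott codimension. Since $\wedge^kV^\vee\to H^0(B,\cW^\vee)$ is surjective and $\cW^\vee$ is globally generated, a general $E$ makes the locus where $\rk\phi_E\le r-j$ have codimension $j(6-r+j)$ in $B$. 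Over that locus the fibre $\Gr(3,6-r+j)$ has dimension $3(3-r+j)$. The resulting contribution has dimension at most $\dim B+9-3r+3j-j(6-r+j)<e$ for $j\ge1$, so it cannot form a component.

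The main obstacle is ruling out extra components made of nonminimal conics, i.e.\ those with $\dim\ker C>k-2$ or $\dim\operatorname{span}C<k+2$. Such a conic lies in a smaller Grassmannian $\Gr(1,3)$ or $\Gr(2,3)$ over a partial flag, or in $\Gr(2,4)$ with a larger kernel, so the conic itself sits inside a plane $\PP^2\subset G$. We plan to bound the dimension of each nonminimal stratum $Z\subset R_2(G)$ and then use the incidence-variety argument. $G$ is homogeneous and the conditions that $C\subset Y_E$ cut a subvariety of codimension $3r$ in the incidence $\{(C,E)\}$, because $H^0(\OO_C(1))$ is $3$-dimensional and the restriction map is surjective. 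So for general $E$ the stratum has dimension $\dim Z-3r$. A direct count shows $\dim Z<2n+k(n-k)-3$, since conics in $\PP^2$-families lose at least one dimension. Hence these strata have dimension $<e$.

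Smoothness uses deformation theory. We show $H^1(N_{C/Y_E})=0$ for every $C\in R_2(Y_E)$ with $E$ general, using the exact sequence $0\to N_{C/Y_E}\to N_{C/G}\to\OO_C(1)^{r}\to0$. Here $N_{C/G}$ is globally generated with vanishing $H^1$, and generic surjectivity of $H^0(N_{C/G})\to H^0(\OO_C(1))^r$ follows from the same incidence-variety/Sard argument. Then $R_2(Y_E)$ is smooth of the expected dimension $e$ everywhere. Combined with the dimension estimates, this shows that every component meets the dense open $\Gr(3,\cW_E)$-locus, which is irreducible. So $R_2(Y_E)$ is irreducible.
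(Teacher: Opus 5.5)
Your proposal is correct and follows essentially the same route as the paper: the relative Grassmannian model of minimal-envelope conics over $\Fl(k-2,k+2;V)$, expected-codimension degeneracy loci for $E\otimes\OO_B\to\cP^\vee$ with the same lower-rank count $e-j(3-r+j)<e$, the incidence bound $\dim Z-3r<e$ for conics in $\alpha$- and $\beta$-planes, and transversality for smoothness (your vanishing $H^1(N_{C/Y_E})=0$ is exactly the paper's transversality of the zero scheme on $R_2(G)$). The only difference is organizational. You run the irreducibility argument directly on $R_2(Y_E)$ and prove nonemptiness of the good locus by a separate genericity argument. The paper instead first shows that $S_E$ is smooth and irreducible, and gets density and nonemptiness of the minimal locus from the bound $\dim(S_E\setminus S^\circ)\le e-1$ of Lemma~\ref{lem:bad}.
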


To state the result in higher codimension, let
\[
B=\Fl(k-2,k+2;V),\qquad
\cP=(\det\cA)\otimes\wedge^2(\cW/\cA),
\]
where $\cA\subset\cW\subset V\otimes\OO_B$ is the universal flag. The bundle $\cP$ has rank six and is a subbundle of $\wedge^kV\otimes\OO_B$. Thus $E$ induces
\[
\Phi_E:\cP\to E^\vee\otimes\OO_B.
\]
Let $S=\Gr_B(3,\cP)$ with tautological subbundle $\cR$, and let
\[
S_E=\{(b,L)\in S:L\subset\ker\Phi_{E,b}\}.
\]
We always endow rank loci with their determinantal scheme structures.

\begin{theorem}\label{thm:high}
Let $r\ge4$ and let $E$ be general. The schemes $R_2(Y_E)$, $S_E$, and $D_3(\Phi_E)$ are either all empty or all nonempty. In the latter case:
\begin{enumerate}[label=\textup{(\roman*)},leftmargin=*]
\item $R_2(Y_E)$ and $S_E$ are smooth of pure dimension $e$, and $D_3(\Phi_E)$ is reduced and Cohen--Macaulay of pure codimension $3(r-3)$ in $B$.
\item The irreducible components of these three schemes are naturally in bijection. Corresponding components are birational, and each component of $S_E$ gives a projective resolution of the corresponding component of $D_3(\Phi_E)$.
\item On every component the correspondence is induced by a dense open set of smooth conics with kernel dimension $k-2$, span dimension $k+2$, and $\rk\Phi_E=3$.
\item The dual incidence scheme
\[
\widetilde D_E=\{(b,K)\in B\times\Gr(r-3,E):K\subset\ker(\Phi_E^\vee)_b\}
\]
is also smooth, projective, and pure of dimension $e$. Its components give resolutions of the same components of $D_3(\Phi_E)$.
\end{enumerate}
If $e<0$, all these schemes are empty. If $0\le e\le r$, then $D_2(\Phi_E)=\varnothing$, and both incidence projections are isomorphisms onto the smooth scheme $D_3(\Phi_E)$.
\end{theorem}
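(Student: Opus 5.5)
We argue by a general-position (Kleiman--Bertini type) analysis on the parameter space of all $E\in\Gr(r,W_0)$, after first stratifying the space of conics. The first step fixes the geometry of a single minimal-envelope conic. For $b=(A\subset W)\in B$, the conics $C\subset G$ with $\ker C\supseteq A$ and $\operatorname{span}C\subseteq W$ are exactly the conics in $\Gr(2,W/A)\subset\PP(\cP_b)=\PP^5$, a smooth quadric. A smooth conic spans a plane $\PP(L)$ with $L\in\Gr(3,\cP_b)$, and conversely every $L$ with $\PP(L)\cap\Gr(2,W/A)$ smooth gives one. Thus the open set $R_2^{\min}$ of smooth conics with $\dim\ker=k-2$ and $\dim\operatorname{span}=k+2$ embeds as an open subset $S^\circ\subset S=\Gr_B(3,\cP)$. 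Its complement in $S$ consists of planes with degenerate section or planes lying in a smaller envelope. The condition $C\subset Y_E$ says $E$ vanishes on $L$, i.e.\ $L\subset\ker\Phi_{E,b}$. Hence $R_2^{\min}(Y_E)=S^\circ\cap S_E$, and this yields the bijection in (iii).

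The second step treats the incidence loci as zero loci. On $S$, the scheme $S_E$ is the zero locus of the section of $\cR^\vee\otimes E^\vee$, a bundle of rank $3r$, induced by $E$. Since $\wedge^kV^\vee\to\cP_b^\vee$ is surjective at every $b$, the bundle $\cR^\vee\otimes E^\vee$ is globally generated by the space $\Hom(E,W_0)$ in the appropriate sense. Varying $E$ therefore gives a smooth universal incidence, and generic smoothness makes $S_E$ smooth of pure dimension $\dim S-3r=\dim B+9-3r=e$. The same argument applies on $B\times\Gr(r-3,E)$, or relatively over $\Gr(r-3,W_0)$, for $\widetilde D_E$: the condition $K\subset\ker\Phi_E^\vee$ is the vanishing of a section of $\cP^\vee\otimes K$, of rank $6(r-3)$. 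We check that its dimension is $\dim B+3(r-3)-6(r-3)+\dots$; in fact $\dim B+(r-3)\cdot3-6(r-3)=\dim B-3(r-3)=e$, which is consistent. For $D_3(\Phi_E)$, the map $\Phi_E$ is a general map between bundles $\cP\to E^\vee$ in a globally generated family. The standard Eagon--Northcott/Kleiman argument then shows that $D_3$ is Cohen--Macaulay of the expected codimension $(6-3)(r-3)$ and reduced, with singular locus contained in $D_2$, which has expected codimension $4(r-2)$. Both incidence varieties are isomorphisms over $D_3\setminus D_2$. Since $D_2$ has codimension larger than $3(r-3)$ and each incidence is equidimensional of dimension $e$, no component of $S_E$ or of $\widetilde D_E$ lies over $D_2$. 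The fibres over $D_2$ are positive-dimensional Grassmannians, so we compute $\dim(\text{preimage of }D_2)<e$. This gives the component bijection and birationality in (ii). If $e\le r$, then $\dim B-4(r-2)<0$ and $D_2=\varnothing$. The emptiness claims and the claim for $e<0$ follow from these dimension counts.

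The main obstacle is (iii): showing that every component of $S_E$ meets $S^\circ$, and that $R_2(Y_E)$ has no extra components supported on nonminimal conics. For $S_E$, we bound $\dim(S\setminus S^\circ)\cap S_E$ by estimating separately the codimension in $S$ of each boundary stratum (planes tangent to the quadric, and planes inside a smaller $\Gr(2,3)$ or $\Gr(1,4)$-type envelope). We then show that $E$ still imposes $3r$ independent conditions along each stratum, using global generation restricted to the stratum, so every stratum meets $S_E$ in dimension $<e$. For $R_2(Y_E)$, smooth conics with $\dim\ker\ge k-1$ lie in a sub-Grassmannian $\Gr(1,\cdot)$ or $\Gr(\cdot,k+1)$, i.e.\ in a projective space or a dual one. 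These form families of dimension smaller than $\dim R_2(G)$, and the same incidence count (each conic imposes $3r$ conditions on $E$, since its span in $\PP(\wedge^kV)$ is a plane) gives dimension $<e$ after intersecting with $Y_E$. Finally, $R_2(Y_E)$ is smooth of dimension $e$ at every point: we prove $H^1(N_{C/Y_E})=0$, or equivalently that the universal family over the space of $E$ is smooth, and apply generic smoothness. Combined with the previous estimate, this shows that the dense open $R_2^{\min}(Y_E)\cong S^\circ\cap S_E$ meets every component.
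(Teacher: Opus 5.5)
Your proposal is correct and follows essentially the paper's route: the identification of minimal conics with $S^\circ$, generic smoothness for the zero loci $S_E$ and $\widetilde D_E$ of globally generated bundles, determinantal general position for $D_3$, and general-position bounds on $S\setminus S^\circ$ and on the nonminimal conics (which lie in $\alpha$- or $\beta$-planes), which give density of the minimal rank-three conics.

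The one count you leave implicit, the bound over $D_2$, has to be done on each exact-rank stratum $q\le 2$. Use codimension $(6-q)(r-q)$ and fibres $\Gr(3,6-q)$, respectively $\Gr(r-3,r-q)$; this gives the bounds $e-(r-q)(3-q)$ and $e-(6-q)(3-q)$. The codimension of $D_2$ together with the largest fibre would not, in general, give a bound below $e$. Also, the section defining $\widetilde D_E$ lives in $\mathcal K^\vee\otimes\cP^\vee$, not $\cP^\vee\otimes\mathcal K$.
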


The restriction to general $E$ is important in this statement. The density assertions are consequences of dimension estimates, rather than hypotheses imposed on selected components. The theorem does not assert irreducibility or rationality in higher codimension: the quintic del Pezzo surface has five conic components, and the genus-eight Fano threefold has a nonrational conic surface (Section~\ref{sec:examples}).

In expected dimension zero, the incidence model gives
\[
\#R_2(Y_E)=\int_S c_3(\cR^\vee)^r.
\]
We prove that the zeros avoid the boundary and recover $1225$ conics for $(k,n,r)=(2,7,7)$ and $1176$ for $(3,6,6)$. These are known numbers, recorded in \cite[Sections 7.4--7.5]{BCFKvS}. That paper also reports their verification by Str\o mme using classical methods and the Schubert package. Our contribution in these examples is the uniform incidence interpretation and the explicit boundary argument; no priority claim is made for the numerical values.

\section{General position and deformation theory}\label{sec:general}
We first record the form of general position needed below. The proof uses ordered sections so that the universal rank conditions are ordinary matrix determinantal conditions.

\begin{lemma}\label{lem:rank}
Let $X$ be a smooth irreducible complex variety, $F$ a vector bundle of rank $m$, and $W\subset H^0(X,F)$ a finite-dimensional generating subspace. Fix $0\le r\le\dim W$. For general $E\in\Gr(r,W)$, let
\[
\epsilon_E:E\otimes\OO_X\to F,\qquad
D_q(E)=\{x:\rk\epsilon_{E,x}\le q\}.
\]
For every $0\le q<\min(r,m)$, $D_q(E)$ is either empty or reduced and Cohen--Macaulay of pure codimension $(r-q)(m-q)$. Each exact-rank stratum is smooth or empty. In particular, the exact-rank-$q$ locus is dense in every component of a nonempty $D_q(E)$.
\end{lemma}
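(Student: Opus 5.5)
Following the hint that precedes the statement, the plan is a Kleiman--Bertini type argument for \emph{ordered} sections. Replace $E$ by an ordered $r$-tuple $s=(s_1,\dots,s_r)\in W^{r}$. Over the open set of linearly independent tuples, the map $W^r\to\Gr(r,W)$ is a principal $GL_r$-bundle. Since rank loci depend only on $\mathrm{span}(s)$, a statement holding for general $s$ holds for general $E$. Now form the universal evaluation morphism
\[
\mathrm{ev}: X\times W^r\longrightarrow \Hom(\CC^r,F)=:\mathcal H,\qquad (x,s)\mapsto (s_1(x),\dots,s_r(x)),
\]
a morphism into the total space of the vector bundle $\mathcal H=\Hom(\CC^r,F)$ over $X$. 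Because $W$ generates $F$, each fibre map $W\to F_x$ is surjective. Hence $W^r\to \Hom(\CC^r,F_x)$ is a surjective linear map, and $\mathrm{ev}$ is a smooth morphism, in fact a trivial-fibred affine bundle, of relative dimension $r(\dim W-m)$ over $\mathcal H$.

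Inside $\mathcal H$, let $\Sigma_q\subset\mathcal H$ be the relative determinantal variety of maps of rank $\le q$. Fibrewise this is the generic determinantal variety in $\Hom(\CC^r,\CC^m)$. By the classical results (Eagon--Hochster), it is reduced, irreducible and Cohen--Macaulay of codimension $(r-q)(m-q)$. Its exact-rank stratum $\Sigma_q^\circ=\Sigma_q\setminus\Sigma_{q-1}$ is smooth and dense, and $\mathcal H\to X$ is locally trivial, so the same properties hold for $\Sigma_q$ globally over the smooth $X$. Pulling back along the smooth morphism $\mathrm{ev}$, the scheme-theoretic preimage $\mathrm{ev}^{-1}(\Sigma_q)$ is again reduced and Cohen--Macaulay of pure codimension $(r-q)(m-q)$ in $X\times W^r$. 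The determinantal scheme structure is compatible, since the ideal of minors pulls back to the ideal of minors of $\epsilon_s$. Likewise $\mathrm{ev}^{-1}(\Sigma_q^\circ)$ is smooth.

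Next project to $W^r$ via $\pi: \mathrm{ev}^{-1}(\Sigma_q)\to W^r$ and apply generic smoothness and generic flatness. Generic smoothness (characteristic zero) applied to the smooth variety $\mathrm{ev}^{-1}(\Sigma_q^\circ)$, stratum by stratum for each $q$, shows that for general $s$ each exact-rank fibre $D_q(s)\setminus D_{q-1}(s)$ is smooth. Its dimension is either the expected $\dim X-(r-q)(m-q)$ or it is empty, the latter when $\pi$ is not dominant from that stratum. For the whole $D_q(s)$ we use generic flatness of $\pi$ over a dense open of $W^r$. Fibres of a flat morphism from a Cohen--Macaulay scheme to a smooth base are Cohen--Macaulay by the local criterion (if $A\to B$ is flat local with $A$ regular, then $B$ CM implies $B/\mathfrak m_AB$ CM). This gives Cohen--Macaulayness and pure dimension $\dim X-(r-q)(m-q)$ of $D_q(s)$, hence pure codimension $(r-q)(m-q)$ whenever nonempty.

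Reducedness and density come together. By induction on $q$, each $D_{q-1}(s)$ has codimension $(r-q+1)(m-q+1)>(r-q)(m-q)$, using $q<\min(r,m)$, or is empty. So $D_{q-1}(s)$ contains no component of the pure-dimensional $D_q(s)$, and the smooth open stratum is dense in every component. A Cohen--Macaulay scheme that is generically reduced, being smooth on a dense open, satisfies $(R_0)+(S_1)$ and is therefore reduced.

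The main obstacle is the Cohen--Macaulay assertion for the fibre. Generic flatness must be applied to the possibly non-proper map $\pi$ from a CM total space, and the local flatness criterion has to be checked carefully. A fallback is to observe that $D_q(s)$ is cut out by the minors of a map of bundles and has the expected codimension. The Eagon--Northcott/Hochster--Eagon theorem (perfection of determinantal ideals of expected codimension in a CM ring) then gives CM directly. This fallback is cleaner, so I would use it, keeping the pullback argument only for the dimension count and the smoothness of the strata.
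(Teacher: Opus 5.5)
Your proposal is correct and follows essentially the same route as the paper: ordered tuples in $\Hom(\CC^r,W)$, a smooth affine-bundle evaluation map to $\Hom(\CC^r,F)$ that makes the universal rank locus locally a product with the generic determinantal variety, generic flatness for Cohen--Macaulayness and purity, and generic smoothness stratum by stratum. It then uses the codimension comparison with $D_{q-1}$ for density, the implication ``Cohen--Macaulay plus generically reduced implies reduced,'' and descent to $\Gr(r,W)$ by basis-invariance, all exactly as the paper does. The ``obstacle'' you flag is not one, since generic flatness needs no properness of $\pi$. Your Hochster--Eagon fallback also works, once the stratumwise bound $\dim D_q(s)\le\dim X-(r-q)(m-q)$ shows that the ideal of minors has the generic height.
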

\begin{proof}
Put $M=\Hom(\CC^r,W)$. Evaluation gives a surjective morphism of vector bundles on $X$,
\[
M\otimes\OO_X\to\Hom(\CC^r,F).
\]
The induced map of total spaces is smooth. Locally on $X$ it is a projection of affine bundles. Hence the universal locus
\[
\mathscr D_q=\{(x,t)\in X\times M:\rk(t(x))\le q\}
\]
is locally the product of an open subset of $X$, an affine space, and the variety of $m\times r$ matrices of rank at most $q$. It is integral and Cohen--Macaulay of codimension $c_q=(r-q)(m-q)$; its exact-rank strata are smooth. We use here the standard determinantal facts recalled in \cite{Fulton,Manivel}.

If $\mathscr D_q\to M$ is not dominant, its general fiber is empty. Otherwise generic flatness permits us to restrict to an open subset of $M$ over which the map is flat. Its fibers are Cohen--Macaulay, since the source is Cohen--Macaulay and the base is smooth. They are pure of dimension $\dim X-c_q$: locally at a closed point, the parameters on the base form a regular sequence in the equidimensional Cohen--Macaulay source. Applying generic smoothness separately to each universal exact-rank stratum makes every nonempty stratum of a general fiber smooth.

For $q=0$, the whole fiber is an exact-rank stratum and hence is smooth. For $q\ge1$, the same dimension calculation for $q-1$ shows that its locus has smaller dimension than $\dim X-c_q$, or is empty. Thus every component of the fiber meets the smooth exact-rank-$q$ locus. A Cohen--Macaulay scheme has no embedded associated primes, so generic reducedness implies reducedness. There are only finitely many rank strata, and we may choose the parameter open set simultaneously for all of them.

Finally, restrict to injective maps $t:\CC^r\hookrightarrow W$. Their image subspaces are the points of $\Gr(r,W)$, and the properties just obtained are invariant under change of basis of $\CC^r$. They therefore hold for general $E$.
\end{proof}

\begin{lemma}\label{lem:bad}
With $X,F,W$ as above, the common zero scheme $Z_E$ of $r$ general sections is smooth of pure codimension $rm$, or empty. For a fixed closed subset $Z\subset X$,
\[
\dim(Z\cap Z_E)\le\dim Z-rm
\]
for general $E$, where a negative right-hand side means emptiness.
\end{lemma}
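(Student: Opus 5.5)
The plan is to reuse the universal incidence construction from Lemma~\ref{lem:rank}, now applied with $q=0$ and a closed subset taken into account. Fix an ordered basis, so a point of $M=\Hom(\CC^r,W)$ is an $r$-tuple $t=(s_1,\dots,s_r)$ of sections. Evaluation $M\otimes\OO_X\to\Hom(\CC^r,F)=F^{\oplus r}$ is surjective because $W$ generates $F$. Hence the universal zero locus
\[
\mathscr Z=\{(x,t)\in X\times M: s_1(x)=\dots=s_r(x)=0\}
\]
is the kernel of a surjection of vector bundles on $X$. It is therefore a vector bundle over $X$ of corank $rm$ in $X\times M$. In particular, $\mathscr Z$ is smooth and irreducible of dimension $\dim X+\dim M-rm$. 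The first assertion is exactly the case $q=0$ of Lemma~\ref{lem:rank}, where the whole fiber is an exact-rank stratum. Concretely: if $\mathscr Z\to M$ is not dominant, a general fiber is empty. Otherwise generic smoothness in characteristic zero gives a dense open subset of $M$ over which the fibers $Z_t$ are smooth of pure dimension $\dim X-rm$. Emptiness covers the case $rm>\dim X$.

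For the second assertion, restrict the same bundle to $Z$. Write $Z=\bigcup Z_i$ as a finite union of irreducible components. Over each $Z_i$, the locus $\mathscr Z_i=\mathscr Z\cap(Z_i\times M)$ is the restriction of the vector bundle $\mathscr Z\to X$, so it is irreducible of dimension $\dim Z_i+\dim M-rm$. This uses only the fiberwise surjectivity of evaluation at points of $Z_i$, not any smoothness of $Z$. Now consider the projection $\mathscr Z_i\to M$. If it is not dominant, the general fiber is empty. If it is dominant, the theorem on fiber dimension gives a dense open subset of $M$ over which every fiber has dimension exactly $\dim\mathscr Z_i-\dim M=\dim Z_i-rm$. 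In either case, for general $t$,
\[
\dim(Z_i\cap Z_t)\le\dim Z_i-rm\le\dim Z-rm,
\]
where a negative value forces emptiness (dominance is impossible if $\dim\mathscr Z_i<\dim M$). Intersecting the finitely many open sets obtained for the $Z_i$, together with the one from the first part, gives a single dense open subset of $M$.

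The last step passes from ordered tuples to $E\in\Gr(r,W)$. Restrict to the open set of injective $t$; its image in $\Gr(r,W)$ is the span $E$ of the $s_i$. The zero scheme $Z_t$ depends only on $E$, since it is cut out by the ideal generated by the components of all sections in $E$, and this is invariant under $\mathrm{GL}_r$. The good open subset of $M$ is therefore $\mathrm{GL}_r$-stable up to shrinking: replace it by the union of its translates, which is still contained in the good locus by invariance. Its image is then a nonempty open subset of $\Gr(r,W)$, because the quotient map from injective tuples is open. I do not expect a real obstacle here. The only points needing care are that surjectivity of evaluation must hold on $Z$, which it does pointwise on all of $X$, and the $\mathrm{GL}_r$-descent, which is handled exactly as at the end of Lemma~\ref{lem:rank}.
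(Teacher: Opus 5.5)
Your proposal is correct and follows essentially the paper's proof: the universal zero locus is the kernel bundle of the surjective evaluation map on $X\times W^r$, generic smoothness gives the first claim, the fiber-dimension theorem is applied componentwise over $Z$ (nondominant components have empty general fiber), and one then restricts to independent tuples. Your explicit $\mathrm{GL}_r$-saturation argument spells out the descent to $\Gr(r,W)$ that the paper compresses into its last sentence.
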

\begin{proof}
On $X\times W^r$, the universal zero scheme is the kernel bundle of the evaluation map to $F^{\oplus r}$. It is smooth. Generic smoothness for its projection to $W^r$ gives the first assertion. Over $Z$, the same kernel bundle has dimension $\dim Z+r\dim W-rm$. Apply the general-fiber dimension theorem to each component of its projection to $W^r$; nondominating components have empty general fiber. Restriction to independent tuples gives the assertion for general $r$-planes.
\end{proof}

For the universal sequence on $G$, write
\[
0\longrightarrow\cU\longrightarrow V\otimes\OO_G\longrightarrow\mathcal Q_G\longrightarrow0.
\]
Then $T_G=\cU^\vee\otimes\mathcal Q_G$ is globally generated, $\dim G=k(n-k)$, and $-K_G=nH$. For a smooth rational curve $C\subset X$ in a smooth variety $X$, deformation theory and Riemann--Roch give
\begin{equation}\label{eq:chi}
T_{[C]}\Hilb(X)=H^0(C,N_{C/X}),\qquad
\chi(N_{C/X})=-K_X\cdot C+\dim X-3.
\end{equation}
Obstructions lie in $H^1(C,N_{C/X})$; see \cite{Hartshorne,Kollar}. By adjunction, a degree-$d$ smooth rational curve on a smooth codimension-$r$ linear section satisfies
\[
\chi(N_{C/Y_E})=nd+k(n-k)-3-r(d+1).
\]

\begin{lemma}\label{lem:ambient}
The scheme $R_2(G)$ is smooth and irreducible of dimension $2n+k(n-k)-3$.
\end{lemma}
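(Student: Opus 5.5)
Smoothness will come from unobstructedness and irreducibility from a parametrization by maps.

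\emph{Smoothness and dimension.} Let $C\subset G$ be a smooth conic, so $C\cong\PP^1$ and $H\cdot C=2$. Since $T_G=\cU^\vee\otimes\cQ_G$ is globally generated, $T_G|_C$ is a direct sum of line bundles $\OO(a_i)$ with $a_i\ge0$. The quotient $T_G|_C\to N_{C/G}$ is surjective, so $N_{C/G}$ is globally generated as well. In particular every summand of $N_{C/G}$ has degree $\ge0$, and therefore $H^1(C,N_{C/G})=0$. By \eqref{eq:chi} the Hilbert scheme is smooth at $[C]$ with tangent space $H^0(N_{C/G})$. Its dimension is
\[
\chi(N_{C/G})=-K_G\cdot C+\dim G-3=2n+k(n-k)-3.
\]
Since $R_2(G)$ is open in $\Hilb(G)$, it is smooth of pure dimension $2n+k(n-k)-3$.

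\emph{Irreducibility.} I would use the morphism space $\Hom_2(\PP^1,G)$ of degree-two maps. By Str\o mme \cite{Stromme} it is smooth and irreducible of dimension $n\cdot 2+k(n-k)$. The locus $M^\circ$ of maps that are embeddings with image a smooth conic is open. Sending a map to its image gives a morphism $M^\circ\to R_2(G)$. This morphism is surjective, since every smooth conic is the image of an isomorphism $\PP^1\to C$, and its fibers are $\mathrm{PGL}_2$-torsors. Hence $R_2(G)$ is the image of an irreducible space and is irreducible, with dimension $2n+k(n-k)-3$, matching the count above. If one prefers not to cite the morphism-space result, there is an alternative argument. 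It uses the kernel/span map $C\mapsto(\ker C,\operatorname{span} C)$ on the open set of minimal-envelope conics, which is a bundle over $B$ with fibers the smooth conics in $\Gr(2,4)$, an irreducible family. One then shows that the nonminimal conics form a locus of smaller dimension. By smoothness and purity of dimension, such a locus cannot be a component.

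\emph{Expected obstacle.} The step needing the most care is the irreducibility input. The key point is that $M^\circ$ is nonempty and $\Hom_2(\PP^1,G)$ is irreducible. Nonemptiness is clear: take a smooth conic in a $\Gr(2,4)\subset G$ obtained by fixing $\ker$ and $\operatorname{span}$. Irreducibility is exactly Str\o mme's theorem. In the alternative route, the obstacle becomes the dimension estimate for conics with $\dim\ker C\ge k-1$ or $\dim\operatorname{span} C\le k+1$. Such a conic lies in some $\PP(\wedge^2 U)$ with $\dim U=k+1$, or in a dual analogue. A parameter count gives dimension at most $(k-1)(n-k+1)+\dim\{\text{conics in }\Gr(2,k+1)\text{-type loci}\}$, which one checks is strictly less than $2n+k(n-k)-3$.
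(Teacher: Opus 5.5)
Your proof is correct and is essentially the paper's: smoothness and the dimension come from $N_{C/G}$ being a globally generated quotient of $f^*T_G$ together with \eqref{eq:chi}. Irreducibility comes from Str\o mme's theorem, which the paper applies to the Quot compactification of degree-two maps before passing to the open subset of embeddings, just as you do with $\Hom_2(\PP^1,G)$. Your alternative kernel--span route also works; it is the content of Proposition~\ref{prop:model} and Lemma~\ref{lem:types} used in the reverse direction. Note, though, that the nonminimal conics lie in $\alpha$-planes $\PP(W/A')$ or in $\beta$-planes $\PP(\wedge^2(W'/A))$ with $\dim W'/A=3$, not in $\PP(\wedge^2U)$ with $\dim U=k+1$.
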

\begin{proof}
For a parametrization $f:\PP^1\to C\subset G$, the bundle $N_{C/G}$ is a quotient of the globally generated bundle $f^*T_G$. It is globally generated, so its first cohomology vanishes. Equation~\eqref{eq:chi} gives the dimension and smoothness.

A map to our subspace Grassmannian corresponds to a locally free quotient $V^\vee\otimes\OO_{\PP^1}\twoheadrightarrow f^*\cU^\vee$ of rank $k$ and degree two. The corresponding Quot compactification is irreducible by \cite{Stromme}. Its nonempty open subset of embedded maps is therefore irreducible, and its image in the Hilbert scheme is precisely $R_2(G)$. This proves irreducibility.
\end{proof}

Let $p:\mathscr C_2\to R_2(G)$ and $q:\mathscr C_2\to G$ be the universal conic and its evaluation map. Set $\mathcal E_2=p_*q^*\OO_G(1)$.
\begin{proposition}\label{prop:smooth}
The bundle $\mathcal E_2$ has rank three and is generated by $W_0$. The zero scheme of the sections induced by $E$ is $R_2(Y_E)$ scheme-theoretically. For general $E$, this scheme is smooth of pure dimension $e(k,n,r)$, or empty; it is empty if $e(k,n,r)<0$.
\end{proposition}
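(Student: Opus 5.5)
We prove the three assertions in turn, reducing the last one to Lemma~\ref{lem:bad} applied on $X=R_2(G)$.

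\emph{Step 1: $\mathcal E_2$ is a rank-three vector bundle.} For a smooth conic $C\subset G$ we have $\OO_G(1)|_C\cong\OO_{\PP^1}(2)$. Hence $h^0=3$ and $h^1=0$ on every fiber of $p$. The map $p:\mathscr C_2\to R_2(G)$ is a smooth $\PP^1$-fibration, flat and proper over the smooth base given by Lemma~\ref{lem:ambient}. Cohomology and base change then shows that $\mathcal E_2=p_*q^*\OO_G(1)$ is locally free of rank three, with fiber $H^0(C,\OO_C(2))$ at $[C]$.

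\emph{Step 2: $W_0$ generates $\mathcal E_2$.} This amounts to surjectivity of the restriction $W_0\to H^0(C,\OO_C(1))$ for each smooth conic. A smooth conic spans a plane $\PP^2\subset\PP(\wedge^kV)$. Restriction of linear forms to that plane is surjective, and restriction from $\PP^2$ to the conic is an isomorphism on $H^0(\OO(1))$, since a smooth plane conic is linearly normal. So the evaluation $W_0\otimes\OO\to\mathcal E_2$ is surjective on every fiber.

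\emph{Step 3: the zero scheme is $R_2(Y_E)$.} An element $s\in E$ gives a section $\sigma_s$ of $\mathcal E_2$. Set-theoretically, $\sigma_s$ vanishes at $[C]$ exactly when $C$ lies in the hyperplane $s=0$. For the scheme structure, take a base $T\to R_2(G)$ with pulled-back family $\mathscr C_T\to T$, flat with conic fibers. The family lies in $Y_E$ if and only if each $s\in E$ restricts to zero in $H^0(\mathscr C_T,q^*\OO(1))=H^0(T,\mathcal E_{2,T})$; here base change holds because $h^1=0$. This is exactly the vanishing of the pulled-back sections $\sigma_s$. So the zero scheme represents the functor of smooth conics in $G$ contained in $Y_E$. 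Smooth conics in $Y_E$ are open in $\Hilb(Y_E)$, and $\Hilb(Y_E)$ is the closed subscheme of $\Hilb(G)$ of subschemes contained in $Y_E$. The two functors therefore agree, giving $Z_E=R_2(Y_E)$ scheme-theoretically.

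\emph{Step 4: smoothness and dimension.} Apply Lemma~\ref{lem:bad} with $X=R_2(G)$, which is smooth and irreducible by Lemma~\ref{lem:ambient}, with $F=\mathcal E_2$ of rank $m=3$, and with $W=W_0$, which generates $F$ by Step 2. For general $E$, the zero scheme is smooth of pure codimension $3r$ or empty. Its dimension is therefore $2n+k(n-k)-3-3r=e$, and it is empty when $e<0$.

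The main obstacle is Step 3, the scheme-theoretic identification. The set-theoretic statement is immediate; the real content is the base-change argument over arbitrary bases. Everything else is formal given the earlier lemmas.
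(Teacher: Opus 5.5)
Your proposal is correct and follows the paper's proof essentially step for step. You use fiberwise $h^0=3$, $h^1=0$ with cohomology and base change for local freeness, surjectivity from $W_0$ via restriction to the Pl\"ucker span plane, the functorial vanishing criterion over test schemes $T\to R_2(G)$ for the scheme-theoretic identification, and finally Lemma~\ref{lem:bad} on $X=R_2(G)$; your extra remarks on linear normality of the plane conic and on $\Hilb(Y_E)\subset\Hilb(G)$ only make explicit what the paper leaves implicit.
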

\begin{proof}
On a smooth conic, $\OO_G(1)|_C=\OO_{\PP^1}(2)$, which has three sections and vanishing first cohomology. Cohomology and base change give local freeness. The Pl\"ucker span of $C$ is a plane, and restriction of linear forms to it and then to $C$ is surjective. Thus $W_0$ generates $\mathcal E_2$.

For a test scheme $T\to R_2(G)$, base change identifies $\mathcal E_2|_T$ with the pushforward of $\OO_G(1)$ on its family $\mathscr C_T$. The induced section of this pushforward is zero exactly when the original section vanishes on $\mathscr C_T$, that is, when $\mathscr C_T$ is contained scheme-theoretically in the corresponding hyperplane. This functorial criterion for every element of a basis of $E$ proves the scheme-theoretic equality. Lemma~\ref{lem:bad} now applies.
\end{proof}

\section{Kernel--span flags and exceptional conics}\label{sec:flags}
For a smooth conic $C\subset G$, put
\[
A_C=\bigcap_{U\in C}U,\qquad W_C=\sum_{U\in C}U.
\]
The splitting of the tautological bundle gives $\dim A_C\ge k-2$ and $\dim W_C\le k+2$, in agreement with the general kernel--span bounds in \cite{BKT}.

\begin{definition}
A smooth conic has \emph{minimal flag envelope} if $\dim A_C=k-2$ and $\dim W_C=k+2$. Write $\Rmin(G)$ for this open locus. Here ``minimal'' refers to the uniquely determined enclosing flag of the indicated dimensions; equivalently, this is the locus where both kernel--span bounds are attained.
\end{definition}

Recall $B$, $\cP$ and $S$ from the introduction, and put $\cQ=\cW/\cA$. Exterior multiplication gives the bundle inclusion
\[
\cP=(\det\cA)\otimes\wedge^2\cQ\hookrightarrow\wedge^kV\otimes\OO_B.
\]
For $b=(A,W)\in B$, the sub-Grassmannian of $k$-planes between $A$ and $W$ is
\[
\Gr(2,W/A)\simeq Q_b^4\subset\PP(\cP_b)\simeq\PP^5.
\]
A point $(b,L)\in S$ thus determines a plane $\PP(L)\subset\PP(\cP_b)$. Let $S^\circ\subset S$ be the locus where this plane cuts $Q_b^4$ in a smooth conic with minimal flag envelope.

\begin{proposition}\label{prop:model}
There is an isomorphism $S^\circ\simeq\Rmin(G)$. Both are dense open subsets of their respective irreducible ambient spaces, and
\[
\dim B=k(n-k)+2n-12,\qquad\dim S=2n+k(n-k)-3.
\]
\end{proposition}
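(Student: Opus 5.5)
We construct mutually inverse morphisms between $S^\circ$ and $\Rmin(G)$ and then read off openness, density and dimensions.

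\emph{From $S^\circ$ to $\Rmin(G)$.} Over $S$, the plane $\PP(\cR)\subset\PP(\cP)$ meets the relative quadric bundle $\cQ^4_B=\Gr_B(2,\cQ)\subset\PP(\cP)$. Inside $\PP(\wedge^kV)$ this quadric is the sub-Grassmannian $\{U: A\subset U\subset W\}\subset G$. On the open set where the intersection is a smooth conic, we obtain a flat family of smooth conics in $G$, since it is a family of plane conic sections in a projective bundle. Requiring minimal flag envelope is an open condition. Indeed, $\dim A_C$ is upper semicontinuous and $\dim W_C$ is lower semicontinuous, because both are computed as ranks of maps of bundles over the family. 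So $S^\circ$ is open, and the universal property of $\Hilb(G)$ gives a morphism $\alpha:S^\circ\to\Rmin(G)$.

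\emph{From $\Rmin(G)$ to $S^\circ$.} Over $\Rmin(G)$, the kernel and span have constant dimensions $k-2$ and $k+2$. Hence they form subbundles of $V\otimes\OO$. The kernel is the kernel of $V\otimes\OO\to p_*q^*(V\otimes\OO/\cU)$, and the span is the image of the dual construction. Constancy of rank gives local freeness via cohomology and base change on $\PP^1$. This yields a morphism $\Rmin(G)\to B$. Every $U\in C$ satisfies $A_C\subset U\subset W_C$, so $C$ lies in $Q^4_b$, and the Plücker plane spanned by $C$ gives a rank-three subbundle $L\subset\cP_b$. The Pl\"ucker span of $C$ is a plane, and its linear span is the image of $\mathcal E_2^\vee\to\wedge^kV\otimes\OO$. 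Conversely, $C=\PP(L)\cap Q^4_b$, because a plane meeting a quadric in a conic is determined by it and $C\subset Q_b$. The two constructions are therefore inverse on points and functorial in families. Hence $S^\circ\simeq\Rmin(G)$ as schemes, and both are smooth.

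\emph{Dimensions and density.} We have $\dim B=\dim\Gr(k-2,n)+\dim\Gr(4,n-k+2)=(k-2)(n-k+2)+4(n-k-2)$, which expands to $k(n-k)+2n-12$. Adding $\dim\Gr(3,6)=9$ gives $\dim S=2n+k(n-k)-3$, which equals $\dim R_2(G)$ by Lemma~\ref{lem:ambient}.

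$S$ is irreducible as a Grassmannian bundle over the irreducible flag variety $B$. The set $S^\circ$ is open in $S$. It is nonempty: for a general plane in $\PP^5$ the section of $\Gr(2,4)$ is a smooth conic whose $2$-planes have trivial intersection and span all of $W/A$, for instance the conic of lines of one ruling of a smooth quadric in $\PP^3$. Hence $S^\circ$ is dense in $S$.

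Through the isomorphism, $\Rmin(G)$ is a nonempty open subset of $R_2(G)$ of dimension $\dim R_2(G)$. Since $R_2(G)$ is irreducible by Lemma~\ref{lem:ambient}, $\Rmin(G)$ is dense.

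\emph{Main obstacle.} The main obstacle is the scheme-theoretic, rather than pointwise, inverse map. Its key input is local freeness and base change for kernel and span in families over arbitrary bases, and I would handle it via the ranks of the evaluation maps $V^\vee\otimes\OO\to p_*q^*\cU^\vee$. An alternative route avoids this: show that $\alpha$ is bijective on points and injective on tangent spaces. Then $\alpha$ is an isomorphism onto an open subset, since $\Rmin(G)$ is smooth by Lemma~\ref{lem:ambient} and has the same dimension, and Zariski's main theorem applies.
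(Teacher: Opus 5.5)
Your approach matches the paper's: the plane-section family in one direction; the kernel, span and Pl\"ucker-span bundles in the other; the dimension count; and density from the irreducibility of $S$ and of $R_2(G)$ (Lemma~\ref{lem:ambient}). Your semicontinuity argument for the openness of $S^\circ$ is a useful detail that the paper leaves implicit.

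One step is not justified, namely the claim $C=\PP(L)\cap Q^4_b$. From $C\subset Q^4_b$ and $C\subset\PP(L)$ you only get $C\subset\PP(L)\cap Q^4_b$. Equality requires $\PP(L)\not\subset Q^4_b$. If the span plane of a minimal conic were contained in $Q^4_b$, the intersection would be the whole plane. Then $(b,L)$ would not lie in $S^\circ$, and your inverse map would not land in $S^\circ$. Your phrase ``a plane meeting a quadric in a conic is determined by it'' presupposes exactly what has to be shown. The Zariski-main-theorem alternative does not avoid this, because surjectivity of $\alpha$ onto $\Rmin(G)$ needs the same fact.

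The missing input is the classification of planes in $\Gr(2,4)\simeq Q^4$. Every plane contained in $\Gr(2,W/A)$ is either an $\alpha$-plane (all two-planes through a fixed line of $W/A$) or a $\beta$-plane (all two-planes inside a fixed three-space of $W/A$).
\begin{itemize}
\item In the first case every $U\in C$ contains a fixed $(k-1)$-plane, contradicting $\dim A_C=k-2$.
\item In the second case every $U\in C$ lies in a fixed $(k+1)$-plane, contradicting $\dim W_C=k+2$.
\end{itemize}
Hence the span plane of a minimal conic is not contained in $Q^4_b$, and the Pl\"ucker quadric restricts to a nonzero quadratic equation on $\PP(L)$. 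Its zero scheme is a degree-two plane curve containing the smooth conic $C$, so it equals $C$. This is the sentence the paper uses, with the classification spelled out in Lemma~\ref{lem:types}. Once it is added, the rest of your argument goes through.
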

\begin{proof}
The plane-section construction gives a flat family of smooth conics on $S^\circ$. This open set is nonempty. For example, in a four-dimensional quotient with basis $e_1,e_2,e_3,e_4$, the planes
\[
U_{[s:t]}=\langle se_1+te_3,\ se_2+te_4\rangle
\]
have zero common intersection, span the quotient, and trace an embedded Pl\"ucker conic. Adding a fixed $(k-2)$-plane supplies an example in $G$.

We justify the inverse in families. On the universal minimal conic, let $u:\mathscr C^{\mathrm{min}}\to\Rmin(G)$ be the projection and $v$ the map to $G$. On every fiber,
\[
v^*\cU|_C\simeq\OO_{\PP^1}(-1)^{\oplus2}\oplus\OO_{\PP^1}^{\oplus(k-2)}.
\]
Indeed, this is forced by the degree $-2$, nonpositive splitting, and the equality $h^0(v^*\cU|_C)=k-2$. Base change shows that $u_*v^*\cU$ is a rank-$(k-2)$ subbundle of $V\otimes\OO$ with fibers $A_C$. Similarly, $u_*\bigl((v^*\cU)^\vee\bigr)$ is locally free of rank $k+2$. The map
\[
V^\vee\otimes\OO\to u_*\bigl((v^*\cU)^\vee\bigr)
\]
has kernel $W_C^\perp$ on each fiber and is surjective, since its image has dimension $\dim W_C=k+2$. Its kernel therefore determines the subbundle with fibers $W_C$. Finally, dualizing the surjection
\[
\wedge^kV^\vee\otimes\OO\to u_*v^*\OO_G(1)
\]
gives the rank-three bundle of Pl\"ucker spans $L_C$.

These bundles define a morphism to $S$. For a minimal conic, its span plane cannot be contained in $Q_b^4$: a plane contained in $\Gr(2,4)$ is one of the two standard plane types, forcing either a fixed line or a three-dimensional upper span in $W/A$. Hence the nonzero quadratic equation of $Q_b^4$ on the span plane cuts out precisely $C$. The constructions are inverse in families. The same two-plane classification is detailed in Lemma~\ref{lem:types} below. The dimension formulas follow from the flag and Grassmannian dimensions, and density in $R_2(G)$ follows from Lemma~\ref{lem:ambient}.
\end{proof}

\begin{definition}
An $\alpha$-plane and a $\beta$-plane in $G$ are respectively
\[
\begin{aligned}
\Pi^\alpha_{A',W}&=\{U:A'\subset U\subset W\},
&&\dim A'=k-1,\quad\dim W=k+2,\\
\Pi^\beta_{A,W'}&=\{U:A\subset U\subset W'\},
&&\dim A=k-2,\quad\dim W'=k+1.
\end{aligned}
\]
Each is a linearly embedded projective plane. Denote the loci of smooth conics spanning these planes by $\Sigma_\alpha$ and $\Sigma_\beta$.
\end{definition}

\begin{lemma}\label{lem:types}
Every nonminimal smooth conic lies in exactly one of $\Sigma_\alpha,\Sigma_\beta$. Moreover,
\[
\codim_{R_2(G)}\Sigma_\alpha=k-1,\qquad
\codim_{R_2(G)}\Sigma_\beta=n-k-1.
\]
For general $E$ of any allowed dimension $r$,
\begin{equation}\label{eq:special}
\begin{split}
\dim(\Sigma_\alpha\cap R_2(Y_E))&\le e-(k-1),\\
\dim(\Sigma_\beta\cap R_2(Y_E))&\le e-(n-k-1).
\end{split}
\end{equation}
Negative upper bounds mean emptiness.
\end{lemma}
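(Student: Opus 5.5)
The proof has three parts: classify nonminimal conics by their Pl\"ucker span plane, compute the codimensions of the two resulting loci, and then cut by $E$.

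\emph{Classification.} Let $C$ be a smooth conic. Its Pl\"ucker span is a plane $\PP^2$ meeting $G$ in the conic. We first reduce to four-dimensional data. Since $\dim A_C\ge k-2$, the splitting of $f^*\cU$ (degree $-2$) is either $\OO(-1)^2\oplus\OO^{k-2}$ or $\OO(-2)\oplus\OO^{k-1}$.

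\begin{itemize}
\item In the second case $\dim A_C=k-1$. Every $U\in C$ contains a fixed $A'$ of dimension $k-1$, so $C$ is a conic in $\PP(V/A')$. Its linear span $W$ then has dimension $k-1+3=k+2$. Hence $C\subset\Pi^\alpha_{A',W}$, and $C$ spans this plane.
\item In the first case $\dim A_C=k-2$, and nonminimality forces $\dim W_C\le k+1$. Then $C\subset\Gr(2,W_C/A_C)$ with $\dim W_C/A_C\le3$. This is a $\PP^2$ of lines in a $3$-space, i.e.\ the $\beta$-plane, so $C$ spans $\Pi^\beta$.
\end{itemize}

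The two cases are disjoint because they are distinguished by $\dim A_C$. Concretely, the span of an $\alpha$-conic has $\dim W=k+2$ while that of a $\beta$-conic has $\dim A=k-2$, so the span plane is of exactly one type. Uniqueness of the plane follows because the conic spans its Pl\"ucker plane.

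\emph{Codimensions.} We count parameters directly. For $\Sigma_\alpha$, choose a flag $A'\subset W$ of dimensions $(k-1,k+2)$; this gives $\dim\Fl(k-1,k+2;V)=(k-1)(n-k+1)+3(n-k-2)$. Then choose a smooth conic in the plane $\Pi^\alpha$, which adds $5$. For $\Sigma_\beta$, choose a flag $(k-2,k+1)$; this gives $(k-2)(n-k+2)+3(n-k-1)$, plus $5$. Comparing with $2n+k(n-k)-3$ from Lemma~\ref{lem:ambient} yields codimensions $k-1$ and $n-k-1$ respectively. These expansions are routine. In each case the conic determines the plane and hence the flag, so each locus is the image of an open subset of a $\PP^5$-bundle over a flag variety. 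The map is injective, so the dimension count is exact. Both $\Sigma_\alpha$ and $\Sigma_\beta$ are locally closed, so their closures $Z$ in $R_2(G)$ have the same dimensions.

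\emph{Cutting by $E$.} We apply Lemma~\ref{lem:bad} on $X=R_2(G)$, with $F=\mathcal E_2$ of rank $m=3$ and $W=W_0$. This is legitimate because Proposition~\ref{prop:smooth} shows that $W_0$ generates $\mathcal E_2$ and that the zero scheme $Z_E$ of the sections from $E$ equals $R_2(Y_E)$. Taking $Z=\overline{\Sigma_\alpha}$ (resp.\ $\overline{\Sigma_\beta}$) gives
\[
\dim(Z\cap R_2(Y_E))\le\dim Z-3r=e-(k-1)
\]
(resp.\ $e-(n-k-1)$), with emptiness when the bound is negative. This is exactly \eqref{eq:special}.

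The main obstacle is the classification step. It requires justifying that $\dim W_C\ge k+2$ whenever $\dim A_C=k-1$. This holds because the conic in $\PP(V/A')$ is nondegenerate: a smooth conic spans a plane in $\PP(V/A')$, and the Pl\"ucker map is linear there. One must also check that a smooth conic in a $\beta$-plane indeed has $\dim A_C=k-2$ exactly, which holds since lines in a $3$-space tracing a conic of the dual plane have no common point.
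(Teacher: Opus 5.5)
Your proof is correct. The classification via the splitting of $f^*\cU$ and the parameter count over $\Fl(k-1,k+2;V)$ and $\Fl(k-2,k+1;V)$ are essentially the paper's argument.

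One small repair in the balanced case: you write $\dim W_C/A_C\le 3$ and then treat $\Gr(2,W_C/A_C)$ as a plane. You should say explicitly that $\dim W_C/A_C=2$ is impossible, since then every $U\in C$ would equal $W_C$ and $C$ would be a point. The paper states this.

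Where you genuinely differ is the final cutting step.
\begin{itemize}
\item The paper applies Lemma~\ref{lem:bad} on the plane parameter spaces $F_\alpha,F_\beta$ to the dual of the rank-three span bundle. It uses that a Pl\"ucker hyperplane containing a smooth conic contains its whole span plane. This bounds the family of $\alpha$- or $\beta$-planes lying in $Y_E$ by $\dim F-3r$, and the paper then adds the five dimensions of conics in each plane.
\item You apply Lemma~\ref{lem:bad} once, on $X=R_2(G)$ with $F=\mathcal E_2$ and $Z=\overline{\Sigma_\alpha}$ or $\overline{\Sigma_\beta}$. This is legitimate: Lemma~\ref{lem:ambient} and Proposition~\ref{prop:smooth}, both proved earlier, supply exactly the needed hypotheses. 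These are a smooth irreducible base, generation by $W_0$, and $Z_E=R_2(Y_E)$ scheme-theoretically. The bound $\dim Z-3r$ equals the claimed one because $e=\dim R_2(G)-3r$.
\end{itemize}

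Your route is shorter and reuses the zero-scheme identification already established. Here only the upper bound $\dim\Sigma\le\dim F+5$ matters, so your injectivity argument is needed only for the exact codimension statement. The paper's route works directly with planes on projective homogeneous bases. It gives the slightly finer information that $Y_E$ contains no $\alpha$-planes (resp.\ $\beta$-planes) at all once $3r>\dim F_\alpha$ (resp.\ $3r>\dim F_\beta$).
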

\begin{proof}
For a parametrization $f:\PP^1\to C\subset G$, the possible splittings are
\[
f^*\cU=\OO(-2)\oplus\OO^{\oplus(k-1)}
\quad\hbox{or}\quad
f^*\cU=\OO(-1)^{\oplus2}\oplus\OO^{\oplus(k-2)}.
\]
In the first case the trivial summands give a fixed $(k-1)$-plane $A'$. The quotient line varies by quadratic polynomials, and an embedding of degree two has a three-dimensional span in $V/A'$. Thus $C$ spans the unique $\alpha$-plane determined by $A'$ and $W_C$.

In the second case, the common intersection is the fixed $(k-2)$-plane $A$. The map from $\OO(-1)^{\oplus2}$ into the trivial quotient bundle has upper span of dimension at most four. If this span has dimension four, the conic is minimal. It cannot have dimension two, since the induced map to $\Gr(2,2)$ would be constant. If its dimension is three, $C$ spans the $\beta$-plane $\Gr(2,W_C/A)$. Conversely, conics in the two stated plane types have respectively kernel dimension $k-1$ and upper span dimension $k+1$, so are nonminimal. The classical classification of planes in a Grassmannian says that every contained plane has one of these two forms; fiberwise it is the two families of planes on $\Gr(2,4)=Q^4$.

The plane parameter spaces are
\[
F_\alpha=\Fl(k-1,k+2;V),\qquad
F_\beta=\Fl(k-2,k+1;V).
\]
Over either space the smooth conics form the open subset of a projective bundle with fiber $\PP^5$ parametrizing nondegenerate quadratic equations. Subtracting these dimensions from $2n+k(n-k)-3$ gives the claimed codimensions.

A Pl\"ucker hyperplane contains a nondegenerate conic in a plane if and only if it contains the entire plane. On each $F_\alpha,F_\beta$, the dual of the rank-three span bundle is generated by $W_0$. Lemma~\ref{lem:bad} makes the common zero scheme of $r$ general sections empty or of codimension $3r$. Adding the five-dimensional family of conics in each surviving plane proves \eqref{eq:special}.
\end{proof}

The section induced by $E$ of $\cR^\vee\otimes E^\vee$ has zero scheme $S_E$. On $S^\circ$, Proposition~\ref{prop:model} identifies it scheme-theoretically with $\Rmin(Y_E)$.

\begin{proposition}\label{prop:boundary}
For general $E$, $S_E$ is smooth of pure dimension $e$, or empty, and
\[
\dim(S_E\setminus S^\circ)\le e-1.
\]
The loci $\Rmin(Y_E)$ are dense in every irreducible component of both $S_E$ and $R_2(Y_E)$ under their open identifications. These two schemes are simultaneously empty, or their irreducible components are naturally in bijection and corresponding components are birational.
\end{proposition}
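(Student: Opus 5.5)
We first obtain smoothness of $S_E$ from Lemma~\ref{lem:bad}, applied on $X=S$ with $F=\cR^\vee$ (rank three) and $W=W_0=\wedge^kV^\vee$. We check that $W_0$ generates $\cR^\vee$: on a fiber, $\cR_{(b,L)}=L\subset\cP_b\subset\wedge^kV$, and restricting linear forms from $\wedge^kV$ to the subspace $L$ is surjective. Hence the zero scheme $S_E$ of the $r$ sections is smooth of pure codimension $3r$ in $S$, or empty. By Proposition~\ref{prop:model}, $\dim S=2n+k(n-k)-3$, so $\dim S_E=e$.

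For the boundary estimate, $S\setminus S^\circ$ is a proper closed subset of the irreducible variety $S$, so $\dim(S\setminus S^\circ)\le\dim S-1$. Lemma~\ref{lem:bad}, applied with $Z=S\setminus S^\circ$, then gives $\dim(S_E\setminus S^\circ)\le \dim S-1-3r=e-1$. Since $S_E$ is pure of dimension $e$, no component lies in the boundary. So $S^\circ\cap S_E$ is dense in every component, and under Proposition~\ref{prop:model} this open set is exactly $\Rmin(Y_E)$ (scheme-theoretically, as noted before the statement).

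On the conic side, Proposition~\ref{prop:smooth} says $R_2(Y_E)$ is smooth of pure dimension $e$, or empty. Its nonminimal part is contained in $\Sigma_\alpha\cup\Sigma_\beta$ by Lemma~\ref{lem:types}, and \eqref{eq:special} bounds its dimension by $e-\min(k-1,n-k-1)\le e-1$, because $2\le k\le n-2$. So again no component is contained in the complement, and $\Rmin(Y_E)$ is dense in every component of $R_2(Y_E)$.

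To compare the two schemes, note that both are smooth, so their irreducible components coincide with their connected components and are disjoint. Each component of either scheme contains a nonempty open subset of the common smooth open subscheme $\Rmin(Y_E)$. Taking the closure of a component of $\Rmin(Y_E)$ in each scheme gives the bijection, and corresponding components share a dense open set, hence are birational. Emptiness is simultaneous, since a nonempty scheme in either case has a nonempty dense open subset lying in $\Rmin(Y_E)$.

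The main obstacle is the boundary estimate on $S$. It relies on $S\setminus S^\circ$ being merely a proper closed subset, which is enough for the codimension-one bound. One has to confirm that $S^\circ$ is open and nonempty; Proposition~\ref{prop:model} gives this. One also needs the sections of $\cR^\vee$ induced by $E$ to be exactly the restrictions of $r$ general elements of $W_0$, so that Lemma~\ref{lem:bad} applies verbatim with $m=3$.
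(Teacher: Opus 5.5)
Your proposal is correct and follows essentially the same route as the paper. The shared steps are: generation of $\cR^\vee$ by $W_0$ together with Lemma~\ref{lem:bad}, for smoothness and for the bound on the proper closed subset $S\setminus S^\circ$; the estimates \eqref{eq:special}, with $\min(k-1,n-k-1)\ge1$, for the nonminimal conics; and the common dense open subscheme $\Rmin(Y_E)$, with disjointness of components of smooth schemes, to match components birationally (your closure-of-components argument just spells out the paper's one-line version).
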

\begin{proof}
The bundle $\cR^\vee$ is generated by $W_0$, because it is a quotient of the pullback of $\cP^\vee$. Lemma~\ref{lem:bad} gives smoothness and pure dimension. The same lemma applied to the proper closed subset $S\setminus S^\circ$, of dimension at most $\dim S-1$, gives the boundary estimate. No component of $S_E$ is contained in that boundary.

By Proposition~\ref{prop:smooth}, every component of $R_2(Y_E)$ has dimension $e$. Its nonminimal locus is the union of the two loci in \eqref{eq:special}, each of dimension strictly smaller than $e$. Thus every component meets the minimal locus. The shared open scheme $S_E\cap S^\circ=\Rmin(Y_E)$ is dense in every component on either side. Components of a smooth scheme are disjoint, so this shared open scheme identifies their components bijectively and preserves their function fields. The argument also proves the emptiness equivalence.
\end{proof}

\section{The uniform rationality range}\label{sec:low}
\begin{proof}[Proof of Theorem~\ref{thm:low}]
For $r=0$, the result follows from Proposition~\ref{prop:model} and the rationality of the relative Grassmannian $S$. Assume $1\le r\le3$. Apply Lemma~\ref{lem:rank} to the map $E\otimes\OO_B\to\cP^\vee$. The locus $D_{r-1}$ is proper or empty, so the full-rank open set $B^{\mathrm{fr}}=B\setminus D_{r-1}$ is nonempty. On this open set, $\mathcal K=\ker\Phi_E$ is locally free of rank $6-r\ge3$, and
\[
S_E|_{B^{\mathrm{fr}}}=\Gr_{B^{\mathrm{fr}}}(3,\mathcal K)
\]
is nonempty and irreducible. Its dimension is $\dim B+3(3-r)=e$.

On the exact-rank-$q$ stratum, $q<r$, the fiber of $S_E\to B$ is $\Gr(3,6-q)$ of dimension $3(3-q)$. Lemma~\ref{lem:rank} gives
\begin{align*}
\dim\bigl(S_E|_{\rk\Phi_E=q}\bigr)
&\le\dim B-(6-q)(r-q)+3(3-q)\\
&=e+(r-q)(q-3)<e.
\end{align*}
By Proposition~\ref{prop:boundary}, $S_E$ is smooth and pure of dimension $e$. No component can be supported over the lower-rank strata, so its irreducible full-rank open set is dense in the whole scheme. Hence $S_E$ is irreducible.

The flag variety $B$ is rational. Trivializing $\mathcal K$ on a nonempty open subset gives
\[
S_E\bir B\times\Gr(3,6-r).
\]
Finally Proposition~\ref{prop:boundary} says that $\Rmin(Y_E)$ is nonempty and dense in $S_E$, and that $R_2(Y_E)$ has the same number of components and the same function field. Together with Proposition~\ref{prop:smooth}, this proves all assertions.
\end{proof}

\section{Degeneracy loci and incidence resolutions}\label{sec:high}
Assume $r\ge4$ and set $D_q=D_q(\Phi_E)$. A three-dimensional subspace of $\ker\Phi_{E,b}$ exists exactly when $\rk\Phi_{E,b}\le3$. Consequently the projective projection
\[
f:S_E\to B
\]
has image $D_3$ on underlying sets, and factors through its determinantal scheme structure. Indeed, on $S_E$ the map $\Phi_E$ factors through the rank-three quotient $\cP/\cR$, so all its $4\times4$ minors vanish. On
\[
D_3^\circ=D_3\setminus D_2
\]
the kernel is a rank-three vector bundle and provides a scheme-theoretic inverse $b\mapsto(b,\ker\Phi_{E,b})$.

\begin{proposition}\label{prop:resolution}
For general $E$, $D_3$ is empty or reduced and Cohen--Macaulay of pure dimension $e$. Every component of $S_E$ dominates a unique component of $D_3$ birationally, and all components of $D_3$ occur exactly once. The resulting morphisms of components are projective resolutions.
\end{proposition}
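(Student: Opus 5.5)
Apply Lemma~\ref{lem:rank} to $X=B$, $F=\cP^\vee$ of rank $m=6$, and $W=W_0$. The bundle $\cP^\vee$ is generated by $W_0$ because $\cP$ is a subbundle of $\wedge^kV\otimes\OO_B$. Then $\Phi_E$ is the dual of $\epsilon_E:E\otimes\OO_B\to\cP^\vee$, with the same rank loci. Since $r\ge4$, we have $q=3<\min(r,6)$. The lemma says $D_3$ is empty or reduced and Cohen--Macaulay of pure codimension $(r-3)(6-3)=3(r-3)$. Its exact-rank-three stratum $D_3^\circ$ is smooth and dense in every component, and $D_2$ has codimension $4(r-2)$. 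The dimension check is
\[
\dim B-3(r-3)=k(n-k)+2n-12-3r+9=e .
\]
So $D_3$ is pure of dimension $e$, and $\dim D_2<e$ whenever $D_2\neq\varnothing$.

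Next compare with $S_E$. By Proposition~\ref{prop:boundary}, $S_E$ is smooth of pure dimension $e$ or empty, and it is projective because it is closed in the projective bundle $S\to B$. Over $D_3^\circ$ the kernel $\mathcal K=\ker\Phi_E$ has rank exactly three, so the fiber of $f$ is the single point $\Gr(3,3)$. The scheme-theoretic inverse $b\mapsto(b,\mathcal K_b)$ already noted before the proposition shows that $f$ restricts to an isomorphism $f^{-1}(D_3^\circ)\simeq D_3^\circ$.

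The key step is to show that $f^{-1}(D_2)$ contains no component of $S_E$. On the exact-rank-$q$ stratum with $q\le2$, the fiber of $f$ is $\Gr(3,6-q)$, of dimension $3(3-q)$. Using the codimension $(r-q)(6-q)$ from Lemma~\ref{lem:rank}, as in the proof of Theorem~\ref{thm:low},
\[
\dim f^{-1}(\{\rk\Phi_E=q\})\le\dim B-(6-q)(r-q)+3(3-q)=e-(3-q)(r-3)<e ,
\]
because $q\le2$ and $r\ge4$. Since $S_E$ is pure of dimension $e$, the open set $f^{-1}(D_3^\circ)$ is dense in every component of $S_E$. Symmetrically, $D_3^\circ$ is dense in every component of $D_3$.

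Components of the smooth scheme $S_E$ are disjoint, and components of $D_3$ meet $D_3^\circ$ in disjoint open pieces, since $D_3^\circ$ is smooth. The isomorphism $f^{-1}(D_3^\circ)\simeq D_3^\circ$ therefore matches the irreducible components of $S_E$ and of $D_3$ bijectively. Each component $Z$ of $S_E$ maps onto the closure of $f(Z\cap f^{-1}(D_3^\circ))$, which is a unique component of $D_3$. The map is birational because it is an isomorphism over a dense open set, and proper because $f$ is projective. Every component of $D_3$ is hit exactly once, since its dense piece of $D_3^\circ$ lifts. Each such morphism is a projective birational morphism from a smooth variety, hence a projective resolution.

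The main obstacle is the density statement for $f^{-1}(D_3^\circ)$. It needs purity of $S_E$ from Proposition~\ref{prop:boundary} together with the stratified fiber-dimension bound; a component of $S_E$ lying entirely over $D_2$ would break the bijection. If the exact-rank strata of $D_2$ behave well, as Lemma~\ref{lem:rank} guarantees by choosing the open set of $E$ simultaneously for all $q$, the estimate above closes this gap.
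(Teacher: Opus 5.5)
Your proof is correct and follows the paper's argument essentially step for step. You use Lemma~\ref{lem:rank} applied to $E\otimes\OO_B\to\cP^\vee$ for the determinantal properties, the stratified fiber-dimension bound to show that no component of the smooth pure-dimensional $S_E$ lies over $D_2$, and the isomorphism over $D_3^\circ$ to match components and obtain birational projective resolutions. One small arithmetic slip: $\dim B-(6-q)(r-q)+3(3-q)$ equals $e-(r-q)(3-q)$, not $e-(3-q)(r-3)$; the two differ by $(3-q)^2$. Since the correct value is even smaller, your conclusion $<e$ is unaffected.
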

\begin{proof}
Lemma~\ref{lem:rank} gives pure codimension $3(r-3)$, reducedness and the Cohen--Macaulay property. On each nonempty exact-rank-$q$ stratum with $q<3$, the dimension calculation is
\begin{equation}\label{eq:lower}
\dim\bigl(S_E|_{\rk\Phi_E=q}\bigr)
\le\dim B-(6-q)(r-q)+3(3-q)
=e-(r-q)(3-q)<e.
\end{equation}
Thus no component of the smooth pure-dimensional scheme $S_E$ is supported over $D_2$. Every component of $D_3$ meets $D_3^\circ$ densely by Lemma~\ref{lem:rank}. Since $f$ is an isomorphism above $D_3^\circ$, its components and those of $D_3$ are in bijection, with birational component maps. The restrictions of $f$ are projective and have smooth sources, so give resolutions of the reduced components.
\end{proof}

\begin{proposition}\label{prop:dual}
For general $E$, the scheme $\widetilde D_E$ of Theorem~\ref{thm:high} is smooth, projective and pure of dimension $e$, or empty. Its projection to $D_3$ is an isomorphism over $D_3^\circ$, and its components give projective resolutions of all components of $D_3$.
\end{proposition}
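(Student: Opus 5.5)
We will realize $\widetilde D_E$ as the zero scheme of a section of a vector bundle on a relative Grassmannian, which parallels the treatment of $S_E$. Put $\Gamma=B\times\Gr(r-3,E)$ and let $\mathcal K\subset E\otimes\OO_\Gamma$ be the tautological rank-$(r-3)$ subbundle. The condition $K\subset\ker(\Phi_E^\vee)_b$ says that the composition
\[
\mathcal K\hookrightarrow E\otimes\OO_\Gamma\xrightarrow{\Phi_E^\vee}\cP^\vee
\]
vanishes. So $\widetilde D_E$ is the zero scheme of a section of $\mathcal K^\vee\otimes\cP^\vee$, a bundle of rank $6(r-3)$. It is projective, being closed in the projective variety $\Gamma$. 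The expected dimension is
\[
\dim B+(r-3)\cdot 3-6(r-3)=\dim B-3(r-3)=e,
\]
using $e=\dim B+9-3r$ from Proposition~\ref{prop:model}.

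Smoothness is where the argument is more delicate. Lemma~\ref{lem:bad} does not apply directly, because the section is not drawn from a fixed generating space with $E$ varying independently: $E$ also enters the base $\Gamma$. We would use the same universal argument as in the proof of Lemma~\ref{lem:rank}. Let $M=\Hom(\CC^r,W_0)$, and replace $\Gr(r-3,E)$ by $\Gr(r-3,\CC^r)$, so that the base $B\times\Gr(r-3,\CC^r)$ no longer depends on $t\in M$. Consider the universal incidence
\[
\mathscr I=\{(b,K,t): t|_K\ \text{composed with restriction to }\cP_b\ \text{vanishes}\}.
\]
Because $W_0$ generates $\cP^\vee$, the map $\Hom(K,W_0)\to\Hom(K,\cP_b^\vee)$ is surjective for every $(b,K)$. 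Hence $\mathscr I$ is a vector subbundle of codimension $6(r-3)$ in the trivial bundle with fiber $M$ over $B\times\Gr(r-3,\CC^r)$, and in particular it is smooth. Generic smoothness for $\mathscr I\to M$ then shows that the fiber over general $t$ is smooth of pure dimension $e$ or empty. Restricting to injective $t$ and using invariance under $\mathrm{GL}_r$ identifies these fibers with $\widetilde D_E$ for general $E$. The same argument run over the stratum $\{\rk\Phi=q\}$ bounds the parts of $\widetilde D_E$ lying over rank strata.

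Next we compare $\widetilde D_E$ with $D_3$. Set-theoretically the projection lands in $D_3$, since $\dim\ker\Phi_{E,b}^\vee=r-\rk\Phi_{E,b}\ge r-3$ exactly when $\rk\Phi_{E,b}\le3$. For the scheme structure we argue as in the proof of Proposition~\ref{prop:resolution}. On $\widetilde D_E$ the map $\Phi_E^\vee$ factors through the rank-three quotient $E/\mathcal K$, so all $4\times4$ minors vanish and the projection factors through the determinantal structure. Over $D_3^\circ$ the kernel $\ker\Phi_E^\vee$ is a rank-$(r-3)$ subbundle of $E$, and $b\mapsto(b,\ker\Phi^\vee_{E,b})$ gives a scheme-theoretic inverse. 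The projection is therefore an isomorphism over $D_3^\circ$.

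To exclude components over $D_2$, note that over the exact-rank-$q$ stratum, $q<3$, the fiber is $\Gr(r-3,r-q)$, of dimension $(r-3)(3-q)$. Lemma~\ref{lem:rank} then gives
\[
\dim\le \dim B-(6-q)(r-q)+(r-3)(3-q).
\]
Subtracting $e=\dim B-3(r-3)$ leaves $-(r-q)(6-q)+(r-3)(6-q)=-(3-q)(6-q)<0$. Since $\widetilde D_E$ is smooth and pure of dimension $e$, no component lies over $D_2$. Every component of $D_3$ meets $D_3^\circ$ densely by Lemma~\ref{lem:rank}, so the components correspond bijectively and birationally. The restricted projections are projective morphisms from smooth sources onto the reduced components, hence resolutions.
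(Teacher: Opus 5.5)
Your proposal is correct and follows essentially the same route as the paper. The paper also fixes an $r$-dimensional space $H_r\simeq\CC^r$, views $\Hom(H_r,W_0)$ as a generating space of sections of $\mathcal K^\vee\otimes\cP^\vee$ on $B\times\Gr(r-3,H_r)$, gets a transverse zero scheme for general $t$ and restricts to injective $t$. It then uses the same rank-stratum bound $e-(3-q)(6-q)<e$ and the component argument of Proposition~\ref{prop:resolution}. The only difference is cosmetic: once the base no longer depends on $E$, the paper simply invokes Lemma~\ref{lem:bad} with a single section from $\Hom(H_r,W_0)$, whereas you re-derive that lemma through the universal incidence $\mathscr I$.
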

\begin{proof}
Fix an $r$-dimensional vector space $H_r$. Let $\mathcal K$ denote the tautological rank-$(r-3)$ subbundle on $\Gr(r-3,H_r)$. On $B\times\Gr(r-3,H_r)$, a map $t:H_r\to W_0$ induces a section of
\[
\mathcal K^\vee\otimes\cP^\vee.
\]
The space $\Hom(H_r,W_0)$ generates this bundle: evaluation $W_0\to\cP_b^\vee$ and restriction $H_r^\vee\to K^\vee$ are both surjective. For general $t$ its zero scheme is therefore transverse by Lemma~\ref{lem:bad}. Restricting to injective $t$ and identifying $H_r$ with its image $E$ gives precisely $\widetilde D_E$. Its dimension is
\[
\dim B+3(r-3)-6(r-3)=e.
\]
Over a rank-three point the kernel of $E\to\cP_b^\vee$ has dimension $r-3$, so $K$ is uniquely determined and the map is an isomorphism there. At rank $q<3$, the fiber is $\Gr(r-3,r-q)$, and
\begin{align*}
\dim\bigl(\widetilde D_E|_{\rk\Phi_E=q}\bigr)
&\le\dim B-(6-q)(r-q)+(r-3)(3-q)\\
&=e-(3-q)(6-q)<e.
\end{align*}
No component is supported over $D_2$. The same component argument as in Proposition~\ref{prop:resolution} finishes the proof.
\end{proof}

\begin{corollary}\label{cor:small}
Suppose $r\ge4$, $E$ is general and $0\le e\le r$. Then $D_2=\varnothing$, and
\[
S_E\simeq D_3\simeq\widetilde D_E.
\]
In particular $D_3$ is a smooth projective scheme, possibly empty, whose components are birational to those of $R_2(Y_E)$. This applies in particular whenever $0\le e\le3$ and $r\ge4$.
\end{corollary}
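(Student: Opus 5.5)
The key step is to show $D_2=\varnothing$ by a dimension count from Lemma~\ref{lem:rank}; the isomorphisms then follow from the description of both incidence projections over $D_3^\circ$.

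\textbf{Step 1: $D_2$ is empty.} Apply Lemma~\ref{lem:rank} with $q=2$ to $E\otimes\OO_B\to\cP^\vee$. Here $m=6$ and $2<\min(r,6)$ since $r\ge4$. So $D_2$ is empty or of pure codimension $(r-2)\cdot4$ in $B$.

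We have
\[
\dim B=e-3r+9,
\]
since $e=\dim S-3r=\dim B+9-3r$. Hence, if nonempty,
\[
\dim D_2=\dim B-4(r-2)=e-7r+17.
\]
The hypothesis $e\le r$ gives $\dim D_2\le 17-6r<0$, because $r\ge4$. So $D_2=\varnothing$. The hypothesis $e\ge0$ is used only to place us in the regime of Theorem~\ref{thm:high}; for $e<0$ everything is empty anyway.

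\textbf{Step 2: the isomorphisms.} With $D_2=\varnothing$ we have $D_3=D_3^\circ$.

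For $f:S_E\to D_3$: the discussion before Proposition~\ref{prop:resolution} shows $f$ factors through the determinantal scheme $D_3$. Over $D_3^\circ$, the kernel bundle gives the scheme-theoretic inverse $b\mapsto(b,\ker\Phi_{E,b})$. So $f$ is an isomorphism $S_E\simeq D_3$.

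For $\widetilde D_E\to D_3$: Proposition~\ref{prop:dual} states this projection is an isomorphism over $D_3^\circ$, hence everywhere. One point to check is that it factors scheme-theoretically through $D_3$. On $\widetilde D_E$, the map $\Phi_E^\vee$ kills the rank-$(r-3)$ subbundle $\mathcal K$, so $\Phi_E^\vee$ factors through a rank-three bundle and all $4\times4$ minors vanish. The inverse sends $b$ to $(b,\ker(\Phi_E^\vee)_b)$, and this kernel is a rank-$(r-3)$ subbundle on $D_3^\circ$.

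\textbf{Step 3: smoothness and birationality.} Since $S_E$ is smooth and projective by Proposition~\ref{prop:boundary} (as a closed subscheme of the projective $S$), $D_3$ is smooth and projective. Its components are birational to those of $R_2(Y_E)$ by Proposition~\ref{prop:boundary} combined with these isomorphisms.

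The final sentence of the statement is immediate: $e\le3<4\le r$ satisfies $e\le r$.

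The main point requiring care is the scheme-theoretic inverse in Step 2. It needs the kernel to be a subbundle, which holds on the exact-rank locus with its reduced-or-determinantal structure. Lemma~\ref{lem:rank} supplies this, since the exact-rank stratum is smooth and the determinantal structure agrees with it there.
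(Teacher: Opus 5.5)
Your route is the same as the paper's. You show $D_2=\varnothing$ with the codimension count of Lemma~\ref{lem:rank} at $q=2$. Then both incidence projections, being isomorphisms over $D_3^\circ$, are isomorphisms everywhere, and smoothness and birationality come from Proposition~\ref{prop:boundary}. Your Step~2 and Step~3 are correct, and Step~2 is more explicit than the paper: it checks the factorization through the determinantal structure and the inverse given by the kernel subbundle, including for $\widetilde D_E$.

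Step~1, however, contains a sign error, so the displayed inequality is false as written. From $e=\dim B+9-3r$ one gets $\dim B=e+3r-9$, not $e-3r+9$. As a check, for the quintic del Pezzo surface ($k=2$, $n=5$, $r=4$, $e=1$) your formula gives $\dim B=-2$, whereas $B=\Gr(4,5)$ has dimension $4$. The correct count is
\[
\dim B-4(r-2)=e-r-1,
\]
not $e-7r+17$. Your bound $\dim D_2\le 17-6r$ is therefore wrong, and it would seem to force $D_2=\varnothing$ under the much weaker condition $e<7r-17$, which the correct count does not support. The repair is immediate: $e-r-1\le -1<0$ exactly when $e\le r$. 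So the hypothesis $e\le r$ is used sharply, not with the large slack your computation suggests. In this count, $r\ge4$ is needed only so that $q=2<\min(r,6)$ is admissible in Lemma~\ref{lem:rank}. With this correction your proof is complete and coincides with the paper's.
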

\begin{proof}
If nonempty, $D_2$ would have dimension
\[
\dim B-4(r-2)=e-r-1<0,
\]
contrary to Lemma~\ref{lem:rank}. Both incidence projections are therefore isomorphisms everywhere. Smoothness follows from Proposition~\ref{prop:boundary} or from smoothness of the exact-rank-three stratum.
\end{proof}

\begin{proof}[Proof of Theorem~\ref{thm:high}]
The incidence fibers show that $S_E$ is empty exactly when $D_3$ is empty. Proposition~\ref{prop:boundary} gives the corresponding equivalence with $R_2(Y_E)$, the componentwise birational correspondence, and density of the smooth minimal locus. Equation~\eqref{eq:lower} shows that its intersection with the rank-three locus is still dense in every component. Combine Propositions~\ref{prop:smooth}, \ref{prop:resolution}, \ref{prop:dual} and Corollary~\ref{cor:small}. If $e<0$, Proposition~\ref{prop:boundary} makes $S_E$ empty, and the same equivalences apply.
\end{proof}

\subsection*{Alternating forms and the dual projection}
The second incidence model admits the projection
\[
\rho:\widetilde D_E\to\Gr(k-2,V)\times\Gr(r-3,E),
\quad (A,W,K)\mapsto(A,K).
\]
For $\eta\in K$, contraction with $A$ defines canonically
\[
\omega_{\eta,A}\in(\det A)^\vee\otimes\wedge^2(V/A)^\vee.
\]
After choosing a generator of $\det A$, this is an alternating form defined up to a common nonzero scalar. For $U=W/A$,
\[
\eta|_{(\det A)\otimes\wedge^2 U}=0
\quad\Longleftrightarrow\quad \omega_{\eta,A}|_U=0.
\]
The fiber of $\rho$ at $(A,K)$ is therefore the scheme of common isotropic four-planes for the forms in $K$. For $r=4$ there is one form; for $r=5$ there is a pencil. Pfaffian rank conditions for such forms are classical \cite{HT}. This description is useful for studying individual triples, but does not itself assert that the fibers are nonempty or irreducible.

\subsection*{Canonical bundle}
Let $\pi:S\to B$. The relative tangent bundle is $\cR^\vee\otimes(\pi^*\cP/\cR)$, so
\[
K_S=\pi^*K_B\otimes\pi^*(\det\cP)^{-3}\otimes(\det\cR^\vee)^{-6}.
\]
If the section defining $S_E$ is regular, $S_E$ is a local complete intersection and its dualizing line bundle is
\begin{equation}\label{eq:canonical}
\omega_{S_E}=\left[\pi^*K_B\otimes\pi^*(\det\cP)^{-3}
\otimes(\det\cR^\vee)^{r-6}\otimes(\det E^\vee)^3\right]_{S_E}.
\end{equation}
For a general $E$ the section is transverse, so $S_E$ is smooth and this is its canonical bundle. Regularity alone does not imply smoothness.

For an explicit divisor-class form, let $h_A$ and $h_W$ denote the classes of $\det\cA^\vee$ and $\det\cW^\vee$ in $\Pic(B)$, and let $h_R$ denote the class of $\det\cR^\vee$ in $\Pic(S)$; we use additive notation. A filtration of $T_B$ has factors $\Hom(\cA,\cW/\cA)$ and $\Hom(\cW,V/\cW)$; hence, up to constant line bundles,
\[
K_B=(\det\cA)^{k+2}\otimes(\det\cW)^{n-k+2},
\qquad \det\cP=(\det\cA)^3\otimes(\det\cW)^3.
\]
Consequently \eqref{eq:canonical} becomes the equality in $\Pic(S_E)$
\begin{equation}\label{eq:canonicalclass}
K_{S_E}=(7-k)\pi^*h_A+(k+7-n)\pi^*h_W+(r-6)h_R.
\end{equation}
Here and in this formula we identify line bundles with their divisor classes. If $k=2$ or $k+2=n$, the corresponding constant determinant class is zero. Under Corollary~\ref{cor:small}, formula~\eqref{eq:canonicalclass} also computes the canonical class of $D_3$ via its identification with $S_E$. Its positivity requires understanding the restrictions of these classes, and is not determined by the signs of the displayed coefficients alone.

\section{Zero-dimensional conic schemes and localization}\label{sec:counts}
\begin{theorem}\label{thm:count}
Suppose $e(k,n,r)=0$ and $E$ is general. Then $R_2(Y_E)$ is a finite reduced scheme, possibly empty, and
\begin{equation}\label{eq:count}
\#R_2(Y_E)=\int_S c_3(\cR^\vee)^r.
\end{equation}
\end{theorem}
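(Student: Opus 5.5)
The plan is to realize $R_2(Y_E)$ as the zero scheme of a transverse section of a rank-$3r$ bundle on the projective variety $S$, and to show that no zeros fall on the boundary $S\setminus S^\circ$.

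\emph{Step 1: the section.} By the remark preceding Proposition~\ref{prop:boundary}, $S_E$ is the zero scheme of the section of $\cR^\vee\otimes E^\vee\simeq(\cR^\vee)^{\oplus r}$ induced by $E$. This bundle has rank $3r$. Since $e=0$ we have $\dim S=3r$, so the expected dimension is zero.

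\emph{Step 2: transversality.} Proposition~\ref{prop:boundary}, which rests on Lemma~\ref{lem:bad} and the generation of $\cR^\vee$ by $W_0$, shows that for general $E$ the scheme $S_E$ is smooth of pure dimension $0$ or empty. So it is a finite reduced set of points. Because $S$ is projective and the section is regular with transverse zeros, the number of points equals the top Chern class:
\[
\#S_E=\int_S c_{3r}\bigl((\cR^\vee)^{\oplus r}\bigr)=\int_S c_3(\cR^\vee)^r.
\]
This uses multiplicativity of the total Chern class; the top class of a direct sum is the product of the top classes.

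\emph{Step 3: the boundary.} Here the dimension count of Proposition~\ref{prop:boundary} gives $\dim(S_E\setminus S^\circ)\le e-1=-1$, so $S_E\setminus S^\circ=\varnothing$. Hence $S_E=S_E\cap S^\circ$, and by Proposition~\ref{prop:model} this is identified scheme-theoretically with $\Rmin(Y_E)$.

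\emph{Step 4: nonminimal conics.} Lemma~\ref{lem:types} bounds $\Sigma_\alpha\cap R_2(Y_E)$ and $\Sigma_\beta\cap R_2(Y_E)$ by $e-(k-1)<0$ and $e-(n-k-1)<0$, using $2\le k\le n-2$. Both are therefore empty, so $R_2(Y_E)=\Rmin(Y_E)$ as schemes. Proposition~\ref{prop:smooth} independently shows that $R_2(Y_E)$ is reduced and finite.

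Combining Steps 2--4 gives $\#R_2(Y_E)=\#S_E$ together with formula~\eqref{eq:count}.

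The main obstacle is Step 3. One must be sure that the estimate of Lemma~\ref{lem:bad} really applies to the whole closed boundary $S\setminus S^\circ$. This boundary includes the planes $\PP(L)$ contained in $Q_b^4$ and the planes meeting $Q_b^4$ in a singular or non-minimal conic, and it must have dimension at most $\dim S-1$, which holds since $S^\circ$ is a nonempty open subset of the irreducible $S$. The excess-intersection issue that would otherwise spoil the Chern number identity is thereby avoided. In particular, a positive-dimensional family of zeros inside planes contained in $Q_b^4$ cannot occur, because general sections cut every fixed closed subset in its expected dimension.
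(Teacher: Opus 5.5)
Your proof is correct and follows essentially the same route as the paper. Transversality of the section of $(\cR^\vee)^{\oplus r}$ on the projective variety $S$ gives a finite reduced zero scheme counted by $\int_S c_3(\cR^\vee)^r$; the estimate $\dim(S_E\setminus S^\circ)\le e-1=-1$ removes boundary zeros; and the bounds on $\Sigma_\alpha\cap R_2(Y_E)$ and $\Sigma_\beta\cap R_2(Y_E)$ remove nonminimal conics, giving $S_E\simeq R_2(Y_E)$ (your explicit check that $e-(k-1)<0$ and $e-(n-k-1)<0$ because $2\le k\le n-2$ is exactly what the paper's appeal to those bounds needs).
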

\begin{proof}
The bundle $(\cR^\vee)^{\oplus r}$ has rank $3r=\dim S$, and a general defining section is transverse. Thus its zero scheme $S_E$ is finite and reduced, with length equal to the integral in \eqref{eq:count}. Proposition~\ref{prop:boundary} gives $S_E\setminus S^\circ=\varnothing$, since that locus has dimension at most $-1$. Likewise, \eqref{eq:special} excludes all nonminimal conics. Hence the open scheme identification is in fact an equality $S_E\simeq R_2(Y_E)$ of finite reduced schemes. This identification with the complete zero scheme ensures that its top Chern number counts exactly the smooth embedded conics, with no boundary contribution.
\end{proof}

Let the diagonal torus of $\operatorname{GL}(V)$ have independent characters $\lambda_1,\ldots,\lambda_n$. A fixed flag in $B$ corresponds to
\[
I\subset J\subset\{1,\ldots,n\},\qquad |I|=k-2,\quad |J|=k+2.
\]
Set $Q=J\setminus I$ and let $\mathcal U_Q$ be the set of two-element subsets of $Q$. The six weights of $\cP$ at this flag are
\[
p_u=\sum_{i\in I}\lambda_i+\lambda_a+\lambda_b,
\qquad u=\{a,b\}\in\mathcal U_Q.
\]
A fixed point of $S$ is a flag together with a three-element subset $T\subset\mathcal U_Q$. Define
\begin{align*}
\Delta_{I,J}&=\prod_{i\in I}\prod_{a\in J\setminus I}(\lambda_a-\lambda_i)
\prod_{i\in J}\prod_{b\notin J}(\lambda_b-\lambda_i),\\
\Delta_T&=\prod_{u\in T}\prod_{v\in\mathcal U_Q\setminus T}(p_v-p_u).
\end{align*}
These are the Euler classes of the flag tangent space and the relative Grassmannian tangent space. Equivariant localization \cite{Brion,Fulton} yields
\begin{equation}\label{eq:localization}
\int_S c_3(\cR^\vee)^r
=\sum_{\substack{I\subset J\\|I|=k-2,\ |J|=k+2}}
\ \sum_{\substack{T\subset\mathcal U_Q\\|T|=3}}
\frac{\left(\prod_{u\in T}(-p_u)\right)^r}{\Delta_{I,J}\Delta_T}.
\end{equation}
The use of pair indices rather than just numerical weight values keeps the indexing unambiguous. Since $S$ is proper and the integrand has codimension $\dim S$, its equivariant pushforward lies in $A_T^0(\mathrm{pt})_{\mathbb Q}=\mathbb Q$. The localization sum therefore represents a constant, independent of every specialization for which the denominators are nonzero. Homogeneity of degree zero alone would not imply this independence.

\begin{corollary}\label{cor:counts}
For general linear sections,
\[
\begin{split}
\#R_2\bigl(\Gr(2,7)\cap H_1\cap\cdots\cap H_7\bigr)&=1225,\\
\#R_2\bigl(\Gr(3,6)\cap H_1\cap\cdots\cap H_6\bigr)&=1176.
\end{split}
\]
\end{corollary}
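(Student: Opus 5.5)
First I check that both cases have expected dimension zero, so that Theorem~\ref{thm:count} applies. For $(k,n,r)=(2,7,7)$ we get $e=14+10-3-21=0$, and for $(3,6,6)$ we get $e=12+9-3-18=0$. Theorem~\ref{thm:count} then says that $R_2(Y_E)$ is finite and reduced for general $E$, with cardinality $\int_S c_3(\cR^\vee)^r$. The corollary is therefore reduced to two numerical evaluations of the localization formula \eqref{eq:localization}, with $S$ of dimension $3r=21$ and $18$ respectively.

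Next I make the sum concrete. For $(2,7)$ we have $k-2=0$, so $I=\varnothing$, $|J|=4$, and $\dim B=10+14-12=12$ (this equals $\dim\Gr(4,7)$). Adding the fiber $\Gr(3,6)$ of dimension $9$ gives $21$, as required. There are $\binom74=35$ choices of $J$ and $\binom63=20$ choices of $T$, so $700$ fixed points in all. For $(3,6)$ we have $|I|=1$, $|J|=5$, and the number of flags is $6\cdot\binom54=30$. With $20$ choices of $T$ this gives $600$ fixed points, and $\dim S=9+9=18$.

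By the remark after \eqref{eq:localization}, the total is a constant independent of the specialization of the $\lambda_i$, provided every denominator is nonzero. I will therefore choose integer weights and evaluate the rational sum exactly. For the denominators to be nonzero I need all $\lambda_i$ distinct, and in addition the six values $p_u$ must be distinct at every fixed flag. Since $p_u-p_v$ is a difference of sums of two $\lambda$'s, a Sidon-type choice works, for example $\lambda_i=2^i$. Then $\lambda_a+\lambda_b\ne\lambda_c+\lambda_d$ for distinct pairs, so every $\Delta_T\neq0$.

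The main obstacle is the exact rational arithmetic over $700$ and $600$ terms. This is finite but not feasible to carry out by hand. I would do it by exact computer algebra with rational numbers and check that the results come out as the integers $1225$ and $1176$. As a cross-check I would rerun with a second weight vector, say $\lambda_i=3^i+i$, and obtain the same integers; a mistake in a weight or a sign would almost certainly give a non-integer or a weight-dependent value.

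To reduce the computation by hand, I would first integrate over the fibers of $\pi$. On $\Gr(3,\cP_b)$, the inner sum over $T$ is the fiber integral $\pi_*c_3(\cR^\vee)^r$. This can be written as a symmetric function in the Chern roots $-p_u$ of $\cP^\vee$, leaving a Schubert-calculus integral over $B$. That integral could then be checked against the Schubert-package values cited from \cite{BCFKvS}.

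Finally, I would check the orientation conventions. Equivariantly, $c_3(\cR^\vee)$ restricts at a fixed point to $\prod_{u\in T}(-p_u)$, since $\cR$ has weights $p_u$ for $u\in T$. The tangent weights must be taken as the characters of $\Hom(\cR,\cP/\cR)$ and $\Hom(\cA,\cW/\cA)\oplus\Hom(\cW,V/\cW)$, matching $\Delta_T$ and $\Delta_{I,J}$. If these conventions were inconsistent, the answer could come out with the wrong sign, or as a non-constant function of the weights.
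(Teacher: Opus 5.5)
Your proposal is correct and follows essentially the same route as the paper. Both reduce via Theorem~\ref{thm:count} to evaluating \eqref{eq:localization} over the $700$ and $600$ fixed points, specialize to powers of two so that distinct pair sums keep every $\Delta_T$ and $\Delta_{I,J}$ nonzero, evaluate exactly by computer, and cross-check with a second weight vector; your explicit verification that $e(2,7,7)=e(3,6,6)=0$ is a useful step the paper leaves implicit.
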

\begin{proof}
In the first case $B=\Gr(4,7)$ and $\cP=\wedge^2\cW$. Formula~\eqref{eq:localization} has $\binom74\binom63=700$ summands. In the second case $B=\Fl(1,5;6)$ and $\cP=\cA\otimes\wedge^2(\cW/\cA)$; there are $6\cdot5\cdot\binom63=600$ summands. Exact rational evaluation gives the two asserted integers. For example, one may specialize to $\lambda_i=2^{i-1}$; distinct pair sums guarantee nonzero relative denominators, and distinct $\lambda_i$ guarantee nonzero base denominators. A second specialization $\lambda_i=3^{i-1}$ gives the same values. Theorem~\ref{thm:count} identifies these lengths with numbers of smooth embedded conics.
\end{proof}

\begin{remark}
For $(k,n,r)=(2,4,3)$ the sum is $1$, as expected: a general plane cuts $\Gr(2,4)=Q^4\subset\PP^5$ in one smooth conic. The two threefolds in Corollary~\ref{cor:counts} are linear-section Calabi--Yau examples in \cite[Table 1, Nos. 12 and 19]{IIM}; the first also occurs in the Pfaffian--Grassmannian setting \cite{Rodland}. The counts agree with the degree-two instanton numbers in \cite{BCFKvS}. We do not identify an uncorrected degree-two stable-map invariant with the Hilbert-scheme count: multiple covers of lines belong to the former problem, whereas the present calculation counts embedded conics directly.
\end{remark}

\section{Examples and remaining questions}\label{sec:examples}
\subsection*{Five components in codimension four}
For
\[
Y=\Gr(2,5)\cap H_1\cap\cdots\cap H_4,
\]
the quintic del Pezzo surface, one has \cite[Lemma 3.2]{CHL}
\[
R_2(Y)\simeq\bigsqcup_{i=1}^5\bigl(\PP^1\setminus\{0,1,\infty\}\bigr).
\]
Thus a uniform irreducibility statement cannot extend from $r\le3$ to $r=4$.

The dual incidence model makes its projective components explicit. Here $A=0$, and a point of $\widetilde D_E$ is a pair consisting of a line $K\subset E$ of alternating forms on $\CC^5$ and an isotropic four-plane $W$. A rank-four alternating form on $\CC^5$ has no isotropic four-plane, whereas a rank-two form has a three-dimensional radical and its isotropic four-planes are parametrized by $\PP^1$. For general $E$, the projective space $\PP(E)\simeq\PP^3$ meets the rank-two locus $\Gr(2,V^\vee)\subset\PP(\wedge^2V^\vee)$ transversely in five points, its degree. The five fibers are copies of $\PP^1$. Since $e=1\le r$, Corollary~\ref{cor:small} gives
\[
S_E\simeq D_3\simeq\widetilde D_E\simeq\bigsqcup_{i=1}^5\PP^1.
\]
This agrees with the compactifications of the five conic pencils on $Y$.

\subsection*{A nonrational surface in codimension five}
For
\[
X=\Gr(2,6)\cap H_1\cap\cdots\cap H_5,
\]
a general prime Fano threefold of genus eight (see also the survey \cite{KPS}), the Hilbert scheme of conics is isomorphic to the Fano surface of lines on the orthogonal cubic threefold \cite{IM}. It is a smooth irreducible surface of general type with invariants \cite[Theorem 1.1.1, case (1.8)]{KPS}
\[
p_g=10,\qquad q=5,\qquad K^2=45.
\]
The smooth-conic locus is dense \cite[Section 4.3]{IM}, so $R_2(X)$ has the same function field and is nonrational. Here $e=2$, so our $D_3$ is also a smooth projective surface birational to this Fano surface. We assert birationality of these models, not an identification of their compactification boundaries. These examples separate the possible failures of irreducibility and rationality beyond the uniform range.

\subsection*{Questions}
The remaining issues for general sections concern the geometry of the components themselves. For which triples $(k,n,r)$ with $r\ge4$ is $D_3$ nonempty and irreducible? When are its components rational or stably rational? Formula~\eqref{eq:canonicalclass}, combined with intersection calculations and information about the nef cone, may help distinguish their birational types. In dimensions at most three, Corollary~\ref{cor:small} removes the need to resolve the degeneracy locus first.

A second question concerns families. Over a nonempty open subset of $\Gr(r,W_0)$ where the incidence spaces are smooth, their components define a finite component cover because the incidence morphisms are proper and smooth. What monodromy acts on this cover, and how does it compare with the Stein factorization of the relative Hilbert scheme of all conics? For special $E$, the general-position bounds may fail, and nonminimal or lower-rank components require a separate analysis. For general $E$, such components have already been excluded by Propositions~\ref{prop:boundary} and \ref{prop:resolution}.

\enlargethispage{\baselineskip}
\section*{Statements and Declarations}
\textbf{Funding.} This research received no financial support.

\noindent
\textbf{Competing interests.} The author has no relevant financial or non-financial interests to disclose.

\noindent\textbf{Data availability.} Data sharing is not applicable to this article as no datasets were generated or analysed during the current study.

\end{document}